\newcommand{\C}{\mathbb{C}}
\newcommand{\F}{\mathbb{F}}
\newcommand{\Fbar}{{\overline{\F}}}
\newcommand{\Q}{\mathbb{Q}}
\newcommand{\Z}{\mathbb{Z}}
\newcommand{\Qbar}{{\overline{\Q}}}
\newcommand{\rhobar}{{\overline{\rho}}}
\newcommand{\calO}{\mathcal{O}}
\newcommand{\ff}{\mathfrak{f}}
\newcommand{\fp}{\mathfrak{p}}
\newcommand{\fP}{\mathfrak{P}}
\newcommand{\fq}{\mathfrak{q}}
\newcommand{\fN}{\mathfrak{N}}
\DeclareMathOperator{\Aut}{Aut}
\DeclareMathOperator{\End}{End}
\DeclareMathOperator{\Frob}{Frob}
\DeclareMathOperator{\Gal}{Gal}
\DeclareMathOperator{\JL}{JL}
\DeclareMathOperator{\Norm}{Norm}
\DeclareMathOperator{\tr}{tr}
\DeclareMathOperator{\T}{\mathbb{T}}
\DeclareMathOperator{\Tr}{Tr}
\newcommand{\vv}{\upsilon}
\newcommand{\GL}{\operatorname{GL}}
\newcommand{\PGL}{\operatorname{PGL}}
\newcommand{\PSL}{\operatorname{PSL}}
\newcommand{\SL}{\operatorname{SL}}
\numberwithin{equation}{section}
\newtheorem{theorem}{Theorem}[section]
\newtheorem{lemma}[theorem]{Lemma}
\newtheorem{corollary}[theorem]{Corollary}
\newtheorem{proposition}[theorem]{Proposition}
\theoremstyle{definition}
\theoremstyle{remark}
\newtheorem{remark}[theorem]{Remark}
\definecolor{darkgreen}{rgb}{0,0.5,0}
\begin{document}

\title{Some extensions of the modular method and Fermat equations
of signature $(13,13,n)$}

\author{Nicolas Billerey}
\address{Universit\'e Clermont Auvergne, CNRS, LMBP, F-63000 Clermont-Ferrand, France.
}
\email{nicolas.billerey@uca.fr}

\author{Imin Chen}

\address{Department of Mathematics, Simon Fraser University\\
Burnaby, BC V5A 1S6, Canada } \email{ichen@sfu.ca}

\author{Lassina Demb\'el\'e}

\address{Department of Mathematics, University of Luxembourg, Maison du Nombre, Avenue de la Fonte, Esch-sur-Alzette L-4364, Luxembourg} 
\email{lassina.dembele@gmail.com}

\author{Luis Dieulefait}

\address{Departament d'Algebra i Geometria,
Universitat de Barcelona,
G.V. de les Corts Catalanes 585,
08007 Barcelona, Spain}
\email{ldieulefait@ub.edu}

\author{Nuno Freitas}
\address{
Instituto de Ciencias Matem\'aticas, CSIC, 
Calle Nicol\'as Cabrera
13--15, 28049 Madrid, Spain}
\email{nuno.freitas@icmat.es}

\date{\today}

\keywords{Fermat equations, abelian surfaces, modularity, Galois representations}
\subjclass[2010]{Primary 11D41; Secondary 11G10, 11F80.}

\thanks{We acknowledge the financial support of ANR-14-CE-25-0015 Gardio (N.~B.), an NSERC Discovery Grant (I.~C.), and the grant {\it Proyecto RSME-FBBVA $2015$ Jos\'e Luis Rubio de Francia} (N.~F.)}

\maketitle

\begin{abstract} 
We provide several extensions of the modular method which were motivated by the problem of completing previous work to prove that, for any integer $n \geq 2$, the equation
\[ x^{13} + y^{13} = 3 z^n \]
has no non-trivial primitive solutions. In particular, we present four elimination techniques which are based on: (1) establishing reducibility of certain residual Galois representations over a totally real field; (2) generalizing image of inertia arguments to the setting of abelian surfaces; (3) establishing congruences of Hilbert modular forms without the use of often impractical Sturm bounds; and (4) a unit sieve argument which combines information from classical descent and the modular method.

The extensions are of broader applicability and provide further evidence that it is possible to obtain a complete resolution of a family of generalized Fermat equations by remaining within the framework of the modular method. As a further illustration of this, we complete a theorem of Anni-Siksek to show that, for $\ell, m\ge 5$, the only primitive solutions to the equation $x^{2\ell} + y^{2m} = z^{13}$ are trivial.
\end{abstract}

\section{\bf Introduction}
In \cite{BCDF}, it was shown that it is possible to get an optimal result for a family of generalized Fermat equations, specifically $x^5 + y^5 = 3 z^n$ for $n \ge 2$ an integer, using a refined modular method with the multi-Frey approach over totally real fields. These methods were also applied to the equation
\begin{equation}\label{fte:13-13-7}
x^{13} + y^{13} = 3 z^p,
\end{equation}
where $p$ is prime, but failed for $p = 7$; this failure is due to the 
presence of five specific Hilbert modular forms after level lowering which the authors 
were not able to eliminate. Four of the obstructing forms are newforms of weight $2$ defined over the real cubic subfield of $\Q(\zeta_{13})$, the cyclotomic field of $13$th root of unity,
whose level is the unique ideal of norm $2808=2^3\cdot 3^3 \cdot 13$. The remaining one, which is the most relevant to us, is denoted by $g$; it is a newform defined  over $\Q(\sqrt{13})$ of parallel weight~$2$, level $\fN = (2^3 13)$ and field of coefficients $\Q(\sqrt{2})$ appearing in \cite[Proposition 9]{BCDF}.

The techniques in this paper provide extensions of the modular method which were initially motivated by the problem of eliminating these five obstructing forms, but are of broader applicability. In particular, we successfully treat the remaining case $p=7$ of equation~\eqref{fte:13-13-7}. 

Our first step is to reduce the resolution of~\eqref{fte:13-13-7} for $p=7$ to the problem of 
dealing with the form~$g$. For this, we consider the Frey curve $F = F_{a,b}$ defined in \cite[\S 7.2]{BCDF}. The refined elimination technique from~\cite[\S 7.3]{BCDF} applied to $F$ 
shows that the obstruction to the modular method arises from the four obstructing newforms defined over the real cubic subfield of $\Q(\zeta_{13})$, with level of norm $2808$. 
We are then able to show that each of these forms has a reducible mod~$\fp$ representation for some prime~$\fp \mid 7$ (see Proposition~\ref{P:reducible}). 
Since the mod~$7$ representation attached to $F$ is irreducible by assumption, this allows us to discard these four forms (see Sections~\ref{S:elimination} and~\ref{S:redg}). 
However, establishing the reducibility property for the four obstructing forms requires a bit of care since the usual approach, which involves using the Sturm bound, is 
not computationally feasible in the setting of Hilbert modular forms.

The remaining obstructing form~$g$ arises for the Frey curve $E = E_{a,b}$ defined in \cite[\S 7.1]{BCDF}, more specifically, when trying to prove \cite[Theorem~7]{BCDF} for $p = 7$. We will introduce three different methods for discarding this form.

The first method eliminates~$g$ by extending an `image of inertia argument' to the setting of abelian surfaces. For this, we will combine modularity of an abelian surface with the study of the inertial types of~$g$ at suitable primes. 

The second method is to show that~$g$ is congruent modulo $7$ to the Hilbert newform associated to the Frey curve corresponding to the trivial solution $(1, -1, 0)$; see also \cite[Remark 7.4]{BCDF}. In general, the proof of such congruences requires computing Hecke eigenvalues up to the Sturm bound, which is often not practical for Hilbert modular forms, as is the case in our situation.  Instead, we exploit a multiplicity one phenomenon for the residual Galois representations attached to the Hilbert newforms in our situation, to establish the desired congruence.

The third method is based on combining the study of units in classical descent with
local information coming from the modular method together with restrictions on 
the solutions coming from the multi-Frey approach in~\cite{BCDF}. 

As a consequence, we obtain a complete resolution of the generalized Fermat equations 
in~\eqref{fte:13-13-7}. More precisely, we establish the following result.
\begin{theorem}\label{T:main} 
For all integers $n \geq 2$, there are no integer solutions $(a,b,c)$ to the equation 
\begin{equation*} \label{E:13137}
 x^{13} + y^{13} = 3 z^n
\end{equation*}
such that $abc \neq 0$ and $\gcd(a,b,c) = 1$.
\end{theorem}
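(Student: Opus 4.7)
The plan is to reduce the statement for general $n \geq 2$ to the case of prime exponent, and then to prove non-existence of non-trivial coprime solutions for each prime $p$. The cases $n \in \{2,3,4\}$ and every prime $p \neq 7$ are already covered by the main results of \cite{BCDF}, so the substantive new content is the exponent $p = 7$. Assume, for contradiction, the existence of a non-trivial coprime solution $(a,b,c)$ to $a^{13} + b^{13} = 3c^7$.

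Two Frey objects are attached to this solution in \cite[\S 7]{BCDF}: the elliptic curve $F = F_{a,b}$ defined over the real cubic subfield of $\Q(\zeta_{13})$, and the elliptic curve $E = E_{a,b}$ defined over $\Q(\sqrt{13})$. By modularity, irreducibility of the mod~$7$ representations (which is verified by the usual arguments), and level-lowering, $\rhobar_{F,7}$ and $\rhobar_{E,7}$ are congruent to the mod~$7$ representations of certain Hilbert newforms. The refined multi-Frey elimination from \cite[\S 7.3]{BCDF} will cut the list of surviving forms down to five: four cubic newforms at level norm $2808$ arising from $F$, and a single newform $g$ over $\Q(\sqrt{13})$ of level $(2^3\cdot 13)$ arising from $E$. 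Proposition~\ref{P:reducible} shows that each of the four cubic forms has a reducible mod~$\fp$ representation for some prime $\fp \mid 7$, which is incompatible with the irreducibility of $\rhobar_{F,7}$. This leaves only $g$ to deal with.

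To eliminate $g$, the plan is to apply any one of the three methods announced in the introduction: an image-of-inertia argument extended to abelian surfaces, exploiting the coefficient field $\Q(\sqrt{2})$ of $g$ together with modularity of an auxiliary abelian surface and comparison of inertial types at a well-chosen auxiliary prime; or a mod~$7$ congruence between $g$ and the Hilbert newform attached to the Frey curve of the trivial solution $(1,-1,0)$, established via a multiplicity-one phenomenon for residual representations rather than via Hecke eigenvalue comparison; or a unit sieve combining classical descent in $\Z[\zeta_{13}]$ with the local information collected by the modular method and the multi-Frey restrictions from \cite{BCDF}. The main obstacle throughout is that the classical route of establishing the needed congruences between Hilbert modular forms by computing Hecke eigenvalues up to the Sturm bound is computationally out of reach both for Proposition~\ref{P:reducible} and for the identification of $g$; each of the techniques above is precisely a conceptual device designed to bypass this obstruction, and the bulk of the paper consists in developing them.
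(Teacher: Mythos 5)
Your proposal correctly mirrors the paper's argument: reduce to prime exponents, dispose of $p\neq 7$ via \cite[Theorem~2]{BCDF}, and for $p=7$ eliminate the four cubic-level obstructing forms via Proposition~\ref{P:reducible} (which handles the ramified prime above~$7$, leaving the unramified ones to be killed by refined elimination) while eliminating the form $g$ over $\Q(\sqrt{13})$ by one of the three new techniques. The one structural point your write-up glosses over is the logical order between the two Frey curves: the elimination of $g$ is carried out \emph{first}, to prove Theorem~\ref{T:thm7} (namely $13\mid a+b$ and $4\mid a+b$), and it is precisely these divisibility conditions that pin down the Serre level of the cubic Frey curve $F_{a,b}$ to $\fN=2\cdot 3\cdot\fq_{13}$ via \cite[Lemmas~8, 9, 11]{BCDF}; only then can the cubic-side elimination in Theorem~\ref{T:reductionToThm7} (which also has to dispose of one further form $h$ by refined elimination at $q=5$) be run to obtain the final contradiction.
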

An integer solution $(a,b,c)$ to \eqref{E:13137} is said to be a primitive solution if $\gcd(a,b,c) = 1$, and trivial if $abc = 0$.

It is natural to wonder whether one could use Chabauty methods to solve~\eqref{E:13137} for $p=7$ by determining the rational points on the hyperelliptic curves associated with \eqref{E:13137} from the constructions in \cite{DahmenSiksek}. This would necessitate computing the $2$-Selmer groups of Jacobians of genus 3 hyperelliptic curves of the form $y^2 = x^7 + a$, where $a$ belongs to the maximal totally real sextic subfield $F$ of the field $\Q(\zeta_{13})$. Because some  values of $a$ which need to be treated are not seventh powers in $F$,  such calculations would require working over the extensions $F(\sqrt[7]{a})$ which are of absolute degree~$42$. Even under GRH, this would be extremely challenging with current methods. Moreover, it would only lead to a conditional resolution of \eqref{E:13137}.

The computations required to support the proof of our theorems were performed using {\tt Magma}~\cite{magma} (V2.25-5), mainly the Hilbert Modular Forms Package (see~\cite{DV13, GV11}). 
The program and output transcript files are available at \cite{programs} (see {\tt Read\_Me.txt}  for a description of the files). The Dokchister-Doris package for computing the conductor of genus $2$ curves is available at~\cite{Dor17}. 

\subsection*{Acknowledgements} We would like to thank T.~Dokchitser and C.~Doris for making their algorithm available to us,
and for helpful email correspondence. We also thank M.~Stoll for conversations regarding the use of the 
Chabauty method for the problem treated in this paper, A.~Pacetti for discussions concerning Sturm bounds for Hilbert modular forms and N. Mascot for (double-)checking part~(\ref{item:1}) of Theorem~\ref{T:modularity} using the algorithm in~\cite{cmsv}. We finally thank the anonymous referees for their careful reading and useful remarks.

\section{The elimination step away from reducible primes} 
\label{S:elimination}

In this section, we recall the elimination step of the modular method and discuss ways in which it can fail to yield a contradiction. 
The discussion will center on equation~\eqref{E:13137} for convenience, but the same principles apply more generally.

Let $(a,b,c)$ be a primitive solution to~\eqref{E:13137} and~$E_{a,b}/K$ a Frey curve attached to it, where $K$ is a totally real number field.
Also, let $\rhobar_{E_{a,b},p}$ be the mod~$p$ representation attached to~$E_{a,b}$. By assumption $\rhobar_{E_{a, b}, p}$ is an irreducible
representation which is modular. An application of standard results on level lowering implies that there is a Hilbert newform $f$ of 
level $\fN$ and weight $2$, with field of coefficients $\Q_f$, such that
\begin{equation}\label{E:iso}
 \rhobar_{E_{a,b},p} \simeq \rhobar_{f,\fp}
\end{equation}
where $\fp \mid p$ is a prime in $\Q_f$ and $\rhobar_{f, \fp}$ the mod~$\fp$ representation attached to $f$.
In practice, the level $\fN$ is a concrete ideal dividing the conductor of $E_{a,b}$, 
independent of~$(a,b,c)$ and of `small' norm. So the new subspace $S_2(\fN)^{\rm new}$ 
of Hilbert cusp forms of level $\fN$, weight $2$ and trivial character
is accessible via the Hilbert Modular Forms package in \verb|Magma|~\cite{magma}.

Given $f \in S_2(\fN)^{\rm new}$, a candidate newform, 
a crucial step in the modular method consists of showing that the isomorphism \eqref{E:iso} does not occur for {\it any} prime $\fp$
of $\Q_f$ above $p$. We say that we {\it have eliminated the form~$f$ for the exponent~$p$} if we can successfully complete that
step. By eliminating all the newforms in $S_2(\fN)^{\rm new}$, we obtain a contradiction to the fact that $\rhobar_{E_{a,b},p}$ is
modular, thus solving~\eqref{E:13137} for the exponent~$p$.

By taking traces at Frobenius elements on both sides of~\eqref{E:iso} one gets
\begin{equation} \label{E:frobCong}
a_\fq(E_{a,b}) \equiv a_\fq(f) \pmod{\fp}\,\, \text{for all primes}\,\,\fq \nmid p\cdot \mathrm{cond}(E_{a,b}),
\end{equation}
where~$a_{\fq}(f)$ is the Hecke eigenvalue of $f$ at $\fq$, and \(a_{\fq}(E_{a,b})\) 
the trace of Frobenius of $E$ at~$\fq$.

Let~$q \neq p$ be a rational prime coprime to the level~$\fN$ of~$f$.
Suppose that~$E_{a,b}$ has good reduction at each prime ideal~$\fq$ dividing~$q$ in~$K$,
and define the quantity
\[
B_q(E_{a,b},f)=\gcd\left( \left\{ \Norm\left( a_{\fq}(E_{a,b})-a_{\fq}(f)\right): \fq\mid q \right\} \right).
\]
Note that~\eqref{E:frobCong} implies the exponent~$p$ divides $q B_q(E_{a,b},f)$. 

Suppose instead that $E_{a,b}$ has multiplicative reduction at a prime $\fq \mid q$. Then, the isomorphism~\eqref{E:iso}
implies that the form~$f$ satisfies level raising conditions, hence we have the congruence:
\begin{equation} \label{E:multCong}
  a_{\fq}(f)\equiv \pm(\Norm(\fq)+1)\pmod{\fp}.
\end{equation}
In the elimination step, we find rational primes~$q$ such that, for all $a,b$ mod~$q$ not both zero, the Frey 
curve~$E_{a,b}$ has either good reduction at all~$\fq \mid q$ 
or multiplicative reduction at all~$\fq \mid q$. 
Then, for a newform~$f \in S_2(\fN)^{\rm new}$, we
define
\[ A_q(f) : = q\prod_{(0,0) \neq (a,b) \in \F_q^2} B_q(E_{a,b},f) \cdot \prod_{\fq \mid q} \Norm(a_{\fq}(f)^2-(\Norm(\fq)+1)^2),\] 
where the first part accounts for the possibility of $E$ having good reduction at~$q$ whilst the second product for the case of multiplicative reduction.
The quantity $A_q(f)$ is independent of~$p$, so once we find one prime~$q$ such that $A_q(f) \neq 0$, we have eliminated~$f$ for large~$p$.
For most~$f$, trying a few auxiliary primes often suffices to find such a $q$. This is then enough to 
eliminate~$f$ for all exponents~$p \nmid A_q(f)$. Furthermore, by using several primes~$q_1,\ldots, q_s$, 
we eliminate~$f$ for all $p \nmid \gcd(A_{q_1}(f),\ldots, A_{q_s}(f))$. 

For a given newform~$f$, the elimination step leads to a set of primes~$P_f$, which cannot be excluded by this process.
This set usually consists of a few small primes, but it can also be empty or equal to the set of the prime numbers; 
the latter only occurs when all the auxiliary primes we tried gave $A_q(f)=0$. 

In~\cite[\S 7.3]{BCDF}, four of the authors introduced the following refined elimination technique to deal with 
the remaining primes in~$P_f$. The key observation of this refinement is that~\eqref{E:frobCong} and~\eqref{E:multCong}
imply $p \mid A_q(f)$ but the converse is not true. So, for the primes~$p \in P_f$, they test for the congruences~\eqref{E:frobCong} 
and~\eqref{E:multCong}, which result from the isomorphism~\eqref{E:iso}, directly. In other words, for each prime $\fp \mid p$ in $\Q_f$,
they check the following congruences by direct computations:
\begin{enumerate}[(i)]
\item\label{item:i} for all~$\fq \mid q$ of good reduction for~$E_{a,b}$ we have $a_{\fq}(f)\equiv a_{\fq}(E_{a,b})\pmod{\fp}$;
\item\label{item:ii} for all~$\fq \mid q$ of multiplicative reduction for~$E_{a,b}$, we have $a_{\fq}(f)\equiv \pm(\Norm(\fq)+1)\pmod{\fp}$;
\end{enumerate}
This can be done as long as we can factor the primes~$p \in P_f$ in the field~$\Q_f$. If (\ref{item:i}) and (\ref{item:ii}) fail for~$\fp \mid p$
then we have eliminated the pair $(f,\fp)$. Excluding all such pairs then eliminates~$f$ for the exponent~$p$. 
In~\cite[\S 7.3]{BCDF}, this refinement allowed for the resolution of \eqref{E:13137} for~$p=5,11$. 

Let us now discuss one way in which the argument above can fail. Suppose for some form~$f$ and a prime~$\fp_0 \mid p \in P_f$ in $\Q_f$, the mod~$\fp_0$ representation is reducible and, moreover, it satisfies $\rhobar_{f,\fp_0}^{ss} \simeq \chi_p \oplus 1$, where~$\chi_p$ 
is the mod~$p$ cyclotomic character. Then, this form satisfies $a_{\fq}(f)\equiv \Norm(\fq)+1 \pmod{\fp_0}$, and
therefore congruence (\ref{item:ii}) holds with $\fp = \fp_0$. Thus we are unable to eliminate the pair $(f,\fp_0)$ using the refined elimination technique.

However, we note that the isomorphism~\eqref{E:iso} was obtained via level lowering theorems, 
which require the representations involved to be residually irreducible. So, in testing for the congruences (\ref{item:i}) and (\ref{item:ii}), 
we only need to consider those pairs $(f, \fp)$, where $\fp \mid p$ is a prime in $\Q_f$ such that $\rhobar_{f, \fp}$ is irreducible.
This allows us to circumvent the reducibility issue raised in the previous paragraph. We will see in Section~\ref{S:reducible} that 
one of the reasons why equation~\eqref{E:13137} does not follow directly from the methods in~\cite{BCDF},  for $p=7$, 
is the presence of  reducible pairs~$(f,\fp_0)$. 

We note that this idea of avoiding reducible primes~$\fp_0$ 
appeared in the work of Bugeaud-Mignotte-Siksek~\cite{BMSI}, where they work with classical forms over~$\Q$. In that setting, a practical Sturm bound is available, allowing them to prove isomorphism 
of the form $\rhobar_{f,\fp_0}^{ss} \simeq \chi_p \oplus 1$.
For our purpose, due to the lack of practical Sturm bounds for Hilbert
modular forms, we will establish the existence of our reducible pairs~$(f,\fp_0)$ using base change arguments.

\section{A reducibility result for Hilbert modular forms}
\label{S:reducible}

In this section, we let $K = \Q(b)$ be the real cubic subfield of $\Q(\zeta_{13})$, where $b^3 + b^2 - 4b + 1 = 0$. 
We let $\fq_{13}$ be the unique ramified prime in~$K$, which lies above~$13$. We note that $2$ and $3$ are inert in $K$. 

Let $\fN = 2 \cdot 3 \cdot \fq_{13}$, and write  $S_2(\fN)^{\rm new}$ for
the new subspace of Hilbert cusp forms of level $\fN$, weight $2$ and trivial character. 
For $f \in S_2(\fN)^{\rm new}$ we let $\Q_f$ denote the field of coefficients of~$f$. 
For a newform~$f$ and $\fp \mid p$ a prime in $\Q_f$, write $\rhobar_{f,\fp}:\,G_K \to \GL_2(\F_\fp)$ the mod $\fp$ representation attached to $f$. Recall that $\rhobar_{f,\fp}$ can ramify only at primes dividing~$\fN$ and $p$.

For certain newforms $f \in S_2(\fN)^{\rm new}$,
we will establish the reducibility of $\rhobar_{f,\fp}$, where $\fp \mid 7$ is a ramified prime in~$\Q_f$. 
We remark that, for cubic fields (or higher degree fields), there are no effective Sturm bounds available in the
literature. Furthermore, even in the quadratic case, such effective bounds are too large to be of any practical use (see Section~\ref{sec:global-proof}).

Instead, we present a method for establishing reducibility in certain situations when the residual representation is a base change from $\Q$. 
The method is presented in the particular case of interest, but could apply in other similar settings.

The space $S_2(\fN)^{\rm new}$ has dimension $181$, and decomposes into 15 Hecke constituents as:
\begin{equation}
 \label{E:degrees}
181 = 1 + \underbrace{1 + 1 + 1} +\, 3 + 6 + 6 + 12 + 15 + 18 + 18 + 21 + 24 + 27 + 27. 
\end{equation}
More specifically, there are:
\begin{enumerate}[1.]
\item Four newforms with rational Hecke eigenvalues. One of them corresponds to the base change of the elliptic curve with Cremona label $78a1$. The other three correspond to non base change elliptic curves over~$K$ which are permuted by the action of $\Gal(K/\Q)$.
\item One newform with Hecke eigenvalues in the real cubic field $\Q(\zeta_7)^+ \subset \Q(\zeta_7)$.

\item Ten newforms whose Hecke eigenvalue fields have degrees $6, 12, 15, 18, 21, 24$ or $27$. For each of the fields $\Q_f$, there
is a {\it unique} subfield $E$ such that $\Q_f$ is a ray class field of degree $3$ over $E$. We have summarized the structure of these
Hecke eigenvalue fields in Table~\ref{table:coeff-fields}. When $[\Q_f:\Q] \le 21$, we have given a
generator for the conductor $\mathfrak{f}_{\Q_f}$ of $\Q_f/E$, and its factorization. For $[\Q_f:\Q]>21$,
only the the factorization is provided. 
\end{enumerate}

\begin{table}\tiny
\caption{The Hecke eigenvalue fields for the newforms in $S_2(\fN)^{\rm new}$ over the cubic subfield
$K$ of $\Q(\zeta_{13})^+$ which are a cubic extension of a unique subfield.}
\label{table:coeff-fields}
\begin{tabular}{ >{$}c<{$} >{$}c<{$} }\toprule

\multicolumn{2}{c}{$[\Q_f:\Q] = 6$} \\ \midrule
E & \Q(w) := \Q[x]/(x^2-x-7) \\
\mathfrak{f}_{\Q_f} & (2 w + 2) = \fp_2 \fp_7 \\ \midrule \\

\multicolumn{2}{c}{$[\Q_f:\Q] = 6$} \\ \midrule
E & \Q(w) := \Q[x]/(x^2-x-1) \\
\mathfrak{f}_{\Q_f} & ( -8 w + 6) = \fp_2 \fp_{19} \\ \midrule \\

\multicolumn{2}{c}{$[\Q_f:\Q] = 12$} \\ \midrule
E & \Q(w) := \Q[x]/(x^4 + 2x^3 - 9x^2 - 12x - 3) \\
\mathfrak{f}_{\Q_f} & ( -w^2 - 3 w + 3) = \fp_2 \fp_7 \\ \midrule \\

\multicolumn{2}{c}{$[\Q_f:\Q] = 15$} \\ \midrule
E & \Q(w) := \Q[x]/(x^5 - 47x^3 - 105x^2 + 70x + 1) \\
 \mathfrak{f}_{\Q_f} & (1/8(-3w^4 - w^3 + 138w^2 + 369w + 9)) = \fp_2 \\ \midrule \\

\multicolumn{2}{c}{$[\Q_f:\Q] = 18$} \\ \midrule
E & \Q(w) := \Q[x]/(x^6 - x^5 - 58x^4 - 79x^3 + 856x^2 + 2865x + 2495) \\
 \mathfrak{f}_{\Q_f} & (-8w^5 + 32w^4 + 363w^3 - 425w^2 - 5425w - 7264) = \fp_2 \fp_{31} \\ \midrule \\

\multicolumn{2}{c}{$[\Q_f:\Q] = 18$} \\ \midrule
E & \Q(w) := \Q[x]/(x^6 - 28x^4 - 2x^3 + 104x^2 + 104x + 28) \\
 \mathfrak{f}_{\Q_f} & (1/2(-w^4 + 2w^3 + 26w^2 - 48w - 40)) = \fp_2 \fp_7 \\ \midrule \\

\multicolumn{2}{c}{$[\Q_f:\Q] = 21$} \\ \midrule
E & \Q(w) := \Q[x]/(x^7 + x^6 - 58x^5 + 172x^4 + 84x^3 - 744x^2 + 512x + 144) \\
 \mathfrak{f}_{\Q_f} & (1/8(-17w^6 - 77w^5 + 726w^4 - 296w^3 - 2904w^2 + 2144w + 592)) =\\ & \fp_2 \fp_7\fp_7' \\ \midrule \\

\multicolumn{2}{c}{$[\Q_f:\Q] = 24$} \\ \midrule
E & \Q(w) := \Q[x]/(x^8 + 3x^7 - 53x^6 - 84x^5 + 1018x^4 + 190x^3 - 6992x^2\\ &{} + 5440x + 5060) \\
 \mathfrak{f}_{\Q_f} & \fp_2 \fp_7 \\ \midrule \\

\multicolumn{2}{c}{$[\Q_f:\Q] = 27$} \\ \midrule
E & \Q(w) := \Q[x]/(x^9 + 2x^8 - 96x^7 - 99x^6 + 2894x^5 + 1462x^4 - 32500x^3\\ &{} +  2240x^2 + 119964x - 102212) \\
 \mathfrak{f}_{\Q_f} & \fp_2 \fp_7 \fp_{13} \\ \midrule \\

\multicolumn{2}{c}{$[\Q_f:\Q] = 27$} \\ \midrule
E & \Q(w) := \Q[x]/(x^9 - x^8 - 90x^7 + 106x^6 + 2878x^5 - 4048x^4 - 38316x^3\\ &{} +  61316x^2 + 172200x - 284688) \\
 \mathfrak{f}_{\Q_f} & \fp_2 \fp_{1153} \\
\bottomrule
\end{tabular}
\end{table}

Let $\sigma$ be a generator for $\Gal(K/\Q)$.
Let $f$ be a newform in a Hecke constituent in $S_2(\fN)^{\rm new}$, and $d_f$ the dimension of this constituent. 
By cyclic base change and Shimura~\cite{shi78}, there exists a newform $f^\sigma \in S_2(\fN)^{\rm new}$ such that 
$\Q_{f^\sigma} =  \Q_f$ and $$a_\fq(f^\sigma) = a_{\fq^\sigma}(f),\,\text{for all primes }\,\fq \,\text{ of } K,$$
so that $\Gal(K/\Q)$ acts on the Hecke constituents.
Since $[K : \Q] = 3$ the orbits of this action are either of size 1 or 3. The Hecke constituents of dimension $1$ split into two orbits under this action: there is one orbit of size $1$ which
corresponds to the base change of the elliptic curve with Cremona label $78a1$; the other orbit of size 3, underlined in~\eqref{E:degrees},
corresponds to a set of $3$ elliptic curves permuted by $\Gal(K/\Q)$.

Assume that $d_f \ge 3$. Then, since the number of Hecke constituents of dimension $d_f \geq 3$ is at most $2$, we conclude these constituents must 
be fixed by the action of $\Gal(K/\Q)$. Thus, $f$ and~$f^\sigma$ belong to the same constituent. However, a quick inspection of the first few Hecke 
eigenvalues shows that $f$ is not a base change. Therefore, there exists a non-trivial element 
$\tau \in \Gal( \Q_f/E)$ such that
$$a_{\fq}(f^\sigma) = a_{\fq^\sigma}(f) = \tau(a_{\fq}(f)),\,\text{for all primes}\,\,\fq \text{ of } K.$$

\begin{lemma}\label{lem:ramified-p7}
For $d = 6, 12, 18, 21, 24$ or $27$, let $S_d \subset S_2(\fN)^{\rm new}$ be the unique Hecke constituent of dimension $d$ containing a newform $f = f_d$ with $7 \mid 
\Norm(\mathfrak{f}_{\Q_f})$, where 
$\mathfrak{f}_{\Q_f}$ is as in Table~\ref{table:coeff-fields}. 
Then, for every prime $\fp_0 \mid \gcd(\mathfrak{f}_{\Q_f}, (7))$, we have $\Norm(\fp_0) = 7$, and $\fp_0$ totally ramifies in $\Q_f/E$.
\end{lemma}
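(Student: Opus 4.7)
The plan is to deduce both assertions from class field theory applied to the description, recalled immediately before the lemma, of $\Q_f$ as the ray class field of $E$ with conductor $\mathfrak{f}_{\Q_f}$ and degree~$3$. Since $[\Q_f:E]=3$ is prime, the extension $\Q_f/E$ is automatically cyclic, and class field theory identifies the primes of $E$ ramifying in $\Q_f$ with the prime divisors of $\mathfrak{f}_{\Q_f}$.

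For the total ramification claim the argument is short. Let $\fp_0$ be a prime of $E$ dividing $\gcd(\mathfrak{f}_{\Q_f},(7))$. Then $\fp_0 \mid \mathfrak{f}_{\Q_f}$, so $\fp_0$ ramifies in the cyclic degree-$3$ extension $\Q_f/E$. Its ramification index $e(\fp_0)$ therefore divides $3$ and exceeds $1$, hence equals $3$; combined with the identity $e(\fp_0)f(\fp_0)g(\fp_0) = [\Q_f:E] = 3$, this forces $f(\fp_0) = g(\fp_0) = 1$, which is exactly the statement that $\fp_0$ is totally ramified in $\Q_f/E$.

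For the norm claim, I would directly inspect Table~\ref{table:coeff-fields}. For each relevant~$d$, the primes of $E$ above $7$ that appear in $\mathfrak{f}_{\Q_f}$ are precisely those labelled $\fp_7$, together with $\fp_7'$ in the case $d=21$. I would verify that each has residue degree~$1$, i.e.\ norm~$7$, by factoring the rational prime $(7)$ in $E$ using the minimal polynomial provided in the table; for entries where a generator $\alpha$ of $\mathfrak{f}_{\Q_f}$ is displayed, I can cross-check by computing $\Norm_{E/\Q}(\alpha)$ and matching the prime factors with the decomposition of $(2)$ and $(7)$ in $E$.

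The main obstacle, such as it is, lies in this last computational step: explicitly factoring $(7)$ in each of the number fields $E$ listed in Table~\ref{table:coeff-fields}. All such factorizations are routine in \verb|Magma| given the explicit polynomials, and the rest of the proof is purely formal, so no conceptual difficulty arises.
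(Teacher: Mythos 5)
Your proposal is correct and supplies an actual argument where the paper gives only the one-line proof ``This follows from a computation in {\tt Magma}.'' Your use of the conductor--ramification theorem to deduce total ramification directly from $\fp_0 \mid \mathfrak{f}_{\Q_f}$ is a nice structural observation: it avoids having to explicitly factor $\fp_0$ in the degree-$3$ extension $\Q_f/E$, leaving only the routine factorization of $(7)$ in the ground fields $E$ from Table~\ref{table:coeff-fields} to be verified by machine. This is a more conceptual route than the paper's unspecified computation, though both ultimately rest on the same tabulated data.

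One phrasing to tighten: you assert that $\Q_f/E$ is ``automatically cyclic'' because $[\Q_f:E]=3$ is prime. A prime-degree extension is not automatically Galois (e.g.\ $\Q(\sqrt[3]{2})/\Q$). The reason $\Q_f/E$ is Galois, and hence cyclic, is that $\Q_f$ is described as a ray class field over $E$, so the extension is abelian; prime degree then forces cyclic. You do open with the ray class field description, so the needed hypothesis is in play, but the sentence as written attributes cyclicity to the degree alone, which is false. With that sentence corrected, the class field theory part of your argument is sound, and the remaining norm claim is, as you say, a finite check.
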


\begin{proof} This follows from a computation in {\tt Magma}.
\end{proof}

\begin{proposition}\label{P:reducible}
For $d = 12, 21, 24$ or $27$, let $f \in S_d$ and $\fp_0$ in $\Q_f$ be as 
in Lemma~\ref{lem:ramified-p7}. 
Suppose that $\rhobar_{f, \fp_0}$ ramifies at all the primes dividing the level~$\fN$.
Then, $\rhobar_{f, \fp_0}$ is reducible.
\end{proposition}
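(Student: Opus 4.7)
The plan is to exploit the Galois-stability of the Hecke constituent $S_d$ together with the total ramification of $\fp_0$ in $\Q_f/E$ to descend the semisimplification of $\bar{\rho}_{f,\fp_0}$ to $G_\Q$, and then to invoke Serre's modularity conjecture (Khare--Wintenberger) to show that irreducibility would force the existence of a classical newform with incompatible local data. First, by the discussion preceding Lemma~\ref{lem:ramified-p7}, for each $d \in \{12,21,24,27\}$ the Hecke constituent $S_d$ is the unique one of its dimension and hence fixed by the $\Gal(K/\Q)$-action, and there exists a non-trivial $\tau \in \Gal(\Q_f/E)$ with $a_{\fq^\sigma}(f) = \tau(a_\fq(f))$ for every prime $\fq$ of $K$. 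Lemma~\ref{lem:ramified-p7} states that $\fp_0$ is totally ramified in $\Q_f/E$, so $\tau$ acts trivially on the residue field $\F_{\fp_0} = \F_7$, yielding
\[
  a_{\fq^\sigma}(f) \equiv a_\fq(f) \pmod{\fp_0} \quad \text{for all primes } \fq \text{ of } K.
\]
By Chebotarev density and Brauer--Nesbitt, $\bar{\rho}_{f,\fp_0}^{\mathrm{ss}}$ is isomorphic to its $\sigma$-conjugate, hence is $\Gal(K/\Q)$-invariant.

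Next, since $K/\Q$ is cyclic of degree $3$ and $\Fbar_7^\times$ is a divisible abelian group, the obstruction in $H^2(\Gal(K/\Q),\Fbar_7^\times)$ vanishes, and $\bar{\rho}_{f,\fp_0}^{\mathrm{ss}}$ extends to a semisimple $\tilde{\rho}: G_\Q \to \GL_2(\Fbar_7)$, unique up to twist by a cubic character of $\Gal(K/\Q)$. Suppose for contradiction that $\bar{\rho}_{f,\fp_0}$ is irreducible; then so is $\tilde{\rho}$, which is odd (its determinant is the cyclotomic character~$\chi_7$) and finite flat at $7$, since $7$ is inert in~$K$ and coprime to~$\fN$. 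The hypothesis that $\bar{\rho}_{f,\fp_0}$ ramifies at every prime of $\fN = 2 \cdot 3 \cdot \fq_{13}$, combined with the fact that $K/\Q$ is unramified at $2,3$ and totally ramified at $13$, bounds the Artin conductor of $\tilde{\rho}$ by an explicit integer supported on $\{2,3,13\}$. By Khare--Wintenberger, $\tilde{\rho} \cong \bar{\rho}_{g,7}$ for a classical newform $g \in S_2(N)^{\mathrm{new}}$ whose level $N$ lies in an explicit finite set.

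To conclude, one enumerates the finitely many candidate newforms $g$ and, for each possible cubic twist arising from the descent, compares $a_\fq(f)$ modulo $\fp_0$ with the corresponding Frobenius traces of $\bar{\rho}_{g,7}|_{G_K}$ at a few auxiliary primes $\fq$; no pair $(g,\text{twist})$ is compatible, delivering the contradiction and forcing $\bar{\rho}_{f,\fp_0}$ to be reducible. The principal technical obstacle is the bookkeeping in the middle step: pinning down the conductor exponent of $\tilde{\rho}$ at $13$ (where the inclusion $I_{K_{\fq_{13}}} \hookrightarrow I_{\Q_{13}}$ of tame inertia has index $3$, so the exponent could \emph{a priori} be $1$ or $2$), and tracking the cubic-twist ambiguity in the descent. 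Both reduce to a finite, explicit verification that can be carried out against the LMFDB.
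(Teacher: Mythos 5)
Your proposal follows essentially the same strategy as the paper: use the congruence $a_{\fq^\sigma}(f) \equiv a_\fq(f) \pmod{\fp_0}$ (coming from total ramification of $\fp_0$ in $\Q_f/E$) to show $\rhobar_{f,\fp_0}$ is $\Gal(K/\Q)$-invariant up to isomorphism, descend it to an irreducible $\rhobar : G_\Q \to \GL_2(\overline{\F}_7)$, apply Khare--Wintenberger, and then rule out the resulting classical newforms. The paper invokes Feit's theorem for the descent rather than the vanishing of $H^2(\Gal(K/\Q),\overline{\F}_7^\times)$, but for an absolutely irreducible representation with scalar centralizer these are interchangeable. One small logical wrinkle in your write-up: you descend the semisimplification first and only afterwards assume irreducibility, but the centralizer-equals-scalars hypothesis needed for the cohomological descent already requires absolute irreducibility, so the assumption should come first (as it does in the paper).

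The genuine gap is the final step, which you leave as an assertion: ``one enumerates the finitely many candidate newforms $g$ \ldots no pair $(g,\text{twist})$ is compatible.'' This is where the proof actually happens, and without carrying it out you cannot know it succeeds. The paper handles the conductor-at-$13$ ambiguity exactly as you anticipated, twisting by a conductor-$13$ character trivial on $G_K$ to force $\vv_{13}(N(\rhobar))=1$, hence $N(\rhobar) = 78$; it then observes that $S_2(78)^{\mathrm{new}}$ contains a \emph{single} Hecke constituent, rational and corresponding to the elliptic curve $78a1$, and finally exploits that $5$ splits in $K$ to compare $a_5(78a1) = 2$ with $a_\fq(f) \bmod \fp_0$ at a prime $\fq \mid 5$, where a direct computation shows the two traces disagree in $\F_7$. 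Until you nail down the level $78$, identify $78a1$, and exhibit that mismatch at a split prime, what you have is a correct strategy outline, not a proof.
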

\begin{proof} Let $f$ and $\fp_0$ be as in the statement. Suppose
that $\rhobar_{f, \fp_0}$ is irreducible.

Recall from the discussion preceding Lemma~\ref{lem:ramified-p7} that for a non-trivial $\sigma \in \Gal(K/\Q)$ there is $\tau \in \Gal(\Q_f/E)$ such that
$$a_{\fq^\sigma}(f) = \tau(a_{\fq}(f)),\,\text{for all primes}\,\,\fq.$$
Since $\fp_0$ is totally ramified in $\Q_f/E$, and the residue field is $\F_{\fp_0} = \F_7$, $\tau: \Q_f \to \Q_f$ gives rise to the identity map $\F_7 \to \F_7$. It follows that
\begin{equation} \label{E:congs}
a_{\fq^\sigma}(f) \equiv a_{\fq}(f) \pmod{\fp_0},\,\,\text{for all primes}\,\,\fq.
\end{equation} 
Let $\tilde{\sigma} \in \Gal(\Qbar/\Q)$ be a lift for $\sigma$. Consider the conjugate representation $\rhobar_{f,\fp_0}^\sigma$ which satisfies
$$\rhobar_{f,\fp_0}^\sigma (\Frob_\fq) := \rhobar_{f,\fp_0}(\Frob_{\fq^\sigma}) = \rhobar_{f,\fp_0}(\tilde{\sigma} \Frob_\fq \tilde{\sigma}^{-1}).$$
(We note that these equalities are independent of the lift $\tilde{\sigma}$.)
By Equation~\eqref{E:congs}, we have 
\[
\Tr(\rhobar_{f,\fp_0}^\sigma(\Frob_\fq)) = a_{\fq^\sigma}(f) \equiv a_{\fq}(f) \equiv \Tr(\rhobar_{f,\fp_0}(\Frob_\fq)) \pmod{\fp_0},
\]
for all primes $\fq$. Since $\rhobar_{f,\fp_0}$ is irreducible (hence absolutely irreducible since $\fp_0 \nmid 2$), the representations $\rhobar_{f, \fp_0}$ and $\rhobar_{f,\fp_0}^\sigma$ are isomorphic. 
From \cite[Theorem 2.14, Chapter III]{feit}, we conclude that there is an irreducible representation $\rhobar : G_\Q \to \GL_2(\overline{\F}_7)$  such that $\rhobar|_{G_K} = \rhobar_{f,\fp_0}$.

We will now determine the Serre level $N = N(\rhobar)$, the nebentypus $\epsilon(\rhobar)$ and 
Serre weight $k(\rhobar)$. 

We have $k(\rhobar) = 2$ because $\rhobar|_{G_K} = \rhobar_{f,\fp_0}$ is of parallel weight~$2$ and 7 is unramified in~$K$.

Recall that $2$ and $3$ are inert in~$K$, and that $13$ is the only ramified prime in $K$.
By assumption $\rhobar_{f,\fp_0}$ ramifies at
all primes $\fq \mid \fN = 2 \cdot 3 \cdot \fq_{13}$, 
hence $\rhobar_{f,\fp_0}$ is a Steinberg representation at 
these primes (since $\fq \Vert \fN$). It follows that $\rhobar$ 
is Steinberg at 2 and 3; thus $\vv_{\ell}(N(\rhobar)) = 1$ for $\ell =2,3$. 
At~$\ell = 13$, the representation $\rhobar$ is either Steinberg or a twist of Steinberg by a character $\chi$ of~$G_\Q$ 
that becomes trivial over $K$. In the latter case, $\chi$ is of conductor $13^1$ and the conductor of $\rhobar$ at~$13$ is $13^2$. 
So twisting $\rhobar$ by $\chi$, if necessary, we can assume also that $\vv_{13}(N(\rhobar)) = 1$ since both $\rhobar$ and 
$\rhobar \otimes \chi$ restrict to $\rhobar_{f,\fp_0}$. Thus $N(\rhobar) = 2 \cdot 3 \cdot 13 = 78$.

Since $\rhobar$ is Steinberg at $\ell = 2,3,13$ it follows that 
the nebentypus $\epsilon(\rhobar)$ is locally trivial at~$\ell$;
thus $\epsilon(\rhobar) = 1$ as there are no other ramified primes for $\rhobar$.

From the above discussion and Khare-Wintenberger (Serre's conjecture)~\cite{kw09}, 
$\rhobar$ is modular~; hence, there is a classical newform~$h$ and a prime $\fp \mid 7$ in $\Q_h$ such that $\rhobar \simeq \rhobar_{h, \fp}$.  it follows that there is a newform $h \in S_2(78)^{\rm new}$ such that $\rhobar \simeq \rhobar_{h,\fp}$ for 
some prime $\fp \mid 7$ in~$\Q_h$.

There is only one such $h$, 
corresponding to the isogeny class of the elliptic curve $W$, with Cremona label $78a1$,
whose trace of Frobenius at $q=5$ is $a_5(W) = 2$. The prime $5$ splits
in $K$ so, for $\fq \mid 5$, we must have the following congruences
\[
 \Tr(\rhobar_{f,\fp_0}(\Frob_\fq)) = \Tr(\rhobar(\Frob_\fq)) = \Tr(\rhobar_{h,\fp}(\Frob_\fq)) = \Tr(\rhobar_{W,7}(\Frob_5)) \equiv 2 \pmod{7}.
\]
But we easily check that we have 
$\rhobar_{f,\fp_0}(\Frob_\fq) \not\equiv 2 \pmod 7$. 

This contradiction implies that $\rhobar_{f,\fp_0}$ must be reducible, as desired.

\end{proof}

\begin{remark} Let $f$ be a newform in the unique Hecke constituent $S_{18}$ such that $7$ ramifies in~$\Q_f$
(as described in Table~\ref{table:coeff-fields}). Then, one easily checks that $\rhobar_{f, \fp}$ is irreducible for the unique prime 
$\fp \mid \mathfrak{f}_{\Q_f}$ above $7$. By adapting the proof of Proposition~\ref{P:reducible}, one can show that 
$f$ is in fact congruent to the base change of one of the newforms in  $S_2(78, \chi)^{\rm new}$. In this case, we see
that $\rhobar_{f, \fp}$ is unramified at $\fq_{13}$. 
\end{remark}

\begin{remark} Let $f \in S_2(\fN)^{\rm new}$ be a newform as in Proposition~ \ref{P:reducible}. 
Then, $f$ appears in the cohomology of a Shimura curve. For
$\fq = (2), (3)$ and $\fq_{13}$, we have $U_\fq f = \pm f$, where $U_\fq$ is the Hecke operator at $\fq$.
We believe that the reducibility of $\rhobar_{f, \fp}$ can be proved by generalising work of Martin~\cite{martin} or 
of Ribet and Yoo~\cite{ribet10, yoo} on non-optimal levels for reducible representations to the setting of Shimura 
curves over totally real number fields.
\end{remark}

\section{Reduction to the problem of eliminating the form~$g$}
\label{S:redg}

We recall that in order to get a complete resolution of equation~\eqref{E:13137}, it only remains to
deal with the prime $p = 7$. There are five possible obstructing forms to this, four of which are 
described in Proposition~\ref{P:reducible} and Table~\ref{table:coeff-fields};
and the form $g$ defined over $\Q(\sqrt{13})$ described in the introduction. 
The goal of this section is to explain how we discard the four forms in Proposition~\ref{P:reducible} by
using the refined elimination technique described in Section~\ref{S:elimination}. We then deal with~$g$ in three different 
ways in Sections~\ref{S:local}, \ref{sec:global-proof}, and \ref{sec:descent-unit}.

Since we are following the same strategy as in \cite{BCDF}, a crucial step in solving~\eqref{E:13137}
for $p = 7$ is to extend~\cite[Theorem 7]{BCDF} to the exponent $p=7$. To this end, we must prove 
the following:

\begin{theorem}\label{thm:discard-g}
Let~$(a,b,c)$ be a non-trivial primitive solution to equation~\eqref{E:13137} for $p = 7$. 

Then $13 \mid a+b$ and $4 \mid a+b$.
\label{T:thm7}
\end{theorem}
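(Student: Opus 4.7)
The strategy is to extend \cite[Theorem 7]{BCDF}, which establishes the same conclusion for all primes $p\neq 7$, to the exponent $p=7$ by applying the modular method to the Frey curve $F=F_{a,b}$ of \cite[\S 7.2]{BCDF} defined over the real cubic subfield $K$ of $\Q(\zeta_{13})$.

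I argue by contraposition. Suppose $(a,b,c)$ is a non-trivial primitive solution of~\eqref{E:13137} with $p=7$ for which $13\nmid a+b$ or $4\nmid a+b$. First, I would verify that $\rhobar_{F,7}$ is absolutely irreducible, using the standard image-of-inertia and Mazur-type arguments from \cite{BCDF}. The conductor computation for $F$ performed in \cite[\S 7.2]{BCDF}, combined with Fujiwara--Jarvis--Rajaei-type level lowering, then produces an isomorphism
\[
\rhobar_{F,7} \simeq \rhobar_{f,\fp}
\]
for some newform $f \in S_2(\fN)^{\rm new}$, where $\fN = 2\cdot 3\cdot \fq_{13}$ is the level studied in Section~\ref{S:reducible}, and some prime $\fp \mid 7$ in $\Q_f$.

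To derive the desired contradiction, I would run the refined elimination technique of \cite[\S 7.3]{BCDF} on $F$ against each of the 15 Hecke constituents of $S_2(\fN)^{\rm new}$ listed in~\eqref{E:degrees}, using a small set of auxiliary rational primes $q$ and testing the congruences~(i) and~(ii) of Section~\ref{S:elimination}. I expect this to eliminate every pair $(f,\fp)$ apart from those belonging to the Hecke constituents $S_{12}$, $S_{21}$, $S_{24}$, $S_{27}$ at their respective ramified primes $\fp_0 \mid 7$ identified in Lemma~\ref{lem:ramified-p7}: at those four pairs congruence~(ii) holds trivially because $\rhobar_{f,\fp_0}^{ss}\simeq\chi_7\oplus 1$. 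Proposition~\ref{P:reducible}, whose ramification hypothesis is secured by the multiplicative reduction of $F$ at each prime dividing $\fN$, then forces $\rhobar_{f,\fp_0}$ to be reducible. Since $\rhobar_{F,7}$ is irreducible, the level-lowering isomorphism cannot hold at these pairs either (as pointed out in the closing paragraph of Section~\ref{S:elimination}), yielding the desired contradiction.

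The main obstacle, and the most computationally intensive step, is the explicit verification in \verb|Magma| that the refined elimination with a manageable set of auxiliary primes $q$ reduces the list of surviving $(f,\fp)$ pairs to exactly the four ramified ones of Proposition~\ref{P:reducible}. This requires Hecke-eigenvalue computations over the coefficient fields $\Q_f$ of degrees up to~27 and factoring 7 in each of them; these tasks are within reach of the Hilbert Modular Forms Package in \verb|Magma|~\cite{magma} at the level $\fN$. A secondary point is to check, for the four surviving pairs, that the conductor of $F$ at $2$, $3$, and $\fq_{13}$ is indeed exactly $\fN$ under the failing divisibility assumption, so that Proposition~\ref{P:reducible} applies.
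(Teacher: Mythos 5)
Your proposal contains a fundamental circularity, and it also mistakes the paper's logic. You set up the argument around the Frey curve $F=F_{a,b}$ over the cubic field $K\subset\Q(\zeta_{13})$ and claim that level lowering places $\rhobar_{F,7}$ at the fixed level $\fN=2\cdot 3\cdot\fq_{13}$. But the conductor/Serre-level computation of \cite[Lemmas 8, 9, 11]{BCDF} invoked to get that level is carried out \emph{assuming} $\vv_2(a+b)\ge 3$ (which is equivalent, for this equation, to the condition $4\mid a+b$) and $13\mid a+b$ --- that is, precisely the two divisibilities you are trying to prove. You acknowledge this as ``a secondary point,'' but it is in fact fatal: under your working hypothesis that $4\nmid a+b$ or $13\nmid a+b$, the conductor of $F$ at $(2)$ and at $\fq_{13}$ will be different, the Serre level will not be $\fN$, and neither the list of 15 Hecke constituents in~\eqref{E:degrees}, nor Lemma~\ref{lem:ramified-p7}, nor Proposition~\ref{P:reducible} (whose ramification hypothesis needs $\rhobar_{f,\fp_0}$ to ramify at \emph{all} of $2,3,\fq_{13}$, matching a Steinberg-at-$\fN$ Frey curve) is applicable. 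What you have written down is, in essence, the paper's proof of Theorem~\ref{T:reductionToThm7}, which explicitly \emph{assumes} Theorem~\ref{T:thm7} in order to justify the level $\fN$ for $F$; it is not a proof of Theorem~\ref{T:thm7}.

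The paper instead proves Theorem~\ref{T:thm7} using the other Frey curve $E=E_{a,b}$ over $\Q(\sqrt{13})$, whose level-lowered conductor does not require the divisibilities in question. As in \cite[Proposition 9]{BCDF}, the level lowering for $E$ leaves, in addition to the base changes of $E_{1,-1},E_{1,0},E_{1,1}$, a single obstructing newform $g$ over $\Q(\sqrt{13})$ at the prime $\fp_7\mid 7$. The three cases $Z\in\{E_{1,-1},E_{1,0},E_{1,1}\}$ are handled by the inertia argument of \cite[Theorem 7]{BCDF}, which is unaffected by $p=7$. The entire content of Sections~\ref{S:local}, \ref{sec:global-proof}, and \ref{sec:descent-unit} is to rule out $\rhobar_{E_{a,b},7}\cong\rhobar_{g,\fp_7}$: by exhibiting an abelian surface $J$ modular of level $\fN=(2^3 w^2)$ with the same mod-$2$ inertial type as $E_{1,-1}$, by a multiplicity-one congruence $\rhobar_{g,\fp_7}\cong\rhobar_{E_{1,-1},7}$, or by a unit sieve. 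None of these steps appears in your proposal, and the reducibility machinery of Section~\ref{S:reducible} that you invoke is used only for the downstream Theorem~\ref{T:reductionToThm7}, never for Theorem~\ref{T:thm7} itself.
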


From~\cite[Remark~7.4]{BCDF}, Theorem~\ref{thm:discard-g} follows provided we can discard the form~$g$.
The following theorem reduces the resolution of~\eqref{E:13137} for $p=7$ to the elimination of the form $g$
by discarding the other four obstructing forms. 

\begin{theorem} Assume Theorem~\ref{T:thm7}. Then Theorem~\ref{T:main} holds.
\label{T:reductionToThm7}
\end{theorem}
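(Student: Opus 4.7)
The plan is, for the exponent $p = 7$, to use the Frey curve $F_{a,b}/K$ of \cite[\S7.2]{BCDF} together with the reducibility result of Proposition~\ref{P:reducible} to complete the refined elimination of \cite[\S7.3]{BCDF}.

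For an arbitrary $n \ge 2$, a standard descent reduces the problem to the case $n = p$ prime: any non-trivial primitive solution $(a,b,c)$ with exponent $n$ yields one with exponent $p$ by replacing $c$ with $c^{n/p}$, and primitivity and non-triviality are preserved. All primes $p \neq 7$ are handled in \cite{BCDF} (and references therein for the small primes $p = 2, 3$), so we focus on $p = 7$ and suppose, for contradiction, that $(a,b,c)$ is a non-trivial primitive solution. By Theorem~\ref{T:thm7}, we have $13 \mid a+b$ and $4 \mid a+b$, which are precisely the divisibility conditions under which the secondary Frey curve $F = F_{a,b}$ over the real cubic subfield $K$ of $\Q(\zeta_{13})$ is constructed in \cite[\S7.2]{BCDF}. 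Invoking known modularity results over totally real cubic fields and the standing irreducibility hypothesis on $\rhobar_{F,7}$, the level-lowering machinery of Section~\ref{S:elimination} produces an isomorphism $\rhobar_{F,7} \simeq \rhobar_{f,\fp}$ for some $f \in S_2(\fN)^{\rm new}$ and some prime $\fp \mid 7$ of $\Q_f$, where $\fN = 2 \cdot 3 \cdot \fq_{13}$ has norm~$2808$.

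The refined elimination of \cite[\S7.3]{BCDF}, applied to every newform of $S_2(\fN)^{\rm new}$, discards every possibility except the four obstructing newforms of Hecke constituent dimensions $12, 21, 24$ and $27$ from Proposition~\ref{P:reducible}, with $\fp$ being the distinguished ramified prime $\fp_0 \mid 7$ of Lemma~\ref{lem:ramified-p7}. To apply Proposition~\ref{P:reducible} to each such $f$, we verify its ramification hypothesis: $\rhobar_{f,\fp} \simeq \rhobar_{F,7}$ ramifies at $2$, $3$, and $\fq_{13}$ (these being the primes of multiplicative reduction of $F$ coming from the Frey construction), and the ramification of $\rhobar_{f,\fp'}$ at primes away from $7$ is independent of the choice of $\fp' \mid 7$, so $\rhobar_{f,\fp_0}$ ramifies at every prime dividing $\fN$. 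Proposition~\ref{P:reducible} then forces $\rhobar_{f,\fp_0}$ to be reducible, contradicting the irreducibility of $\rhobar_{F,7}$.

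The main obstacle in this argument is establishing the reducibility in Proposition~\ref{P:reducible} itself, which was carried out via the base change technique of Section~\ref{S:reducible}; with that in hand, the rest of the proof is a straightforward extension of the strategy used in \cite{BCDF} for primes $p \neq 7$.
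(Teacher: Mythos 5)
Your overall strategy matches the paper's: reduce to prime exponent, dispatch $p \neq 7$ via \cite{BCDF}, use Theorem~\ref{T:thm7} to secure $4 \mid a+b$ and $13 \mid a+b$ so that the Frey curve $F_{a,b}$ and the level-lowering of \cite[Lemmas 8, 9, 11]{BCDF} apply, compute $S_2(\fN)^{\rm new}$, and eliminate newforms using the refined technique together with Proposition~\ref{P:reducible}. The one genuine error is your auxiliary claim that ``the ramification of $\rhobar_{f,\fp'}$ at primes away from $7$ is independent of the choice of $\fp' \mid 7$.'' That is false in general — the remark following Proposition~\ref{P:reducible} gives the explicit counterexample $S_{18}$, whose mod-$\fp$ representation is \emph{unramified} at $\fq_{13}$ for the ramified prime $\fp \mid 7$, even though the $\fp$-adic representation has conductor divisible by $\fq_{13}$. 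Indeed, ramification of residual representations dropping at primes away from $p$ is precisely the phenomenon level-lowering exploits. Fortunately you do not actually need this claim: you have already argued that the surviving pair is $(f, \fp_0)$ with $\fp_0$ the ramified prime, so $\rhobar_{f,\fp_0} \simeq \rhobar_{F,7}$ itself has Serre level $\fN$ and thus satisfies the hypothesis of Proposition~\ref{P:reducible} directly.

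Two smaller points worth noting. First, the paper applies the two tools in the opposite order: it invokes Proposition~\ref{P:reducible} \emph{first}, to conclude that any surviving $\fp$ must be unramified (since a ramified $\fp$ would yield a reducible $\rhobar_{f,\fp}$, contradicting irreducibility of $\rhobar_{F,7}$), and \emph{then} runs the refined elimination at $q=5$ to dispose of the unramified primes. Your ordering — refined elimination narrows to $\fp_0$, then Proposition~\ref{P:reducible} finishes — is logically equivalent, but makes the computational claim ``only $\fp_0$ survives'' carry more weight than the paper gives it. Second, the paper records that the standard elimination at $q=5,11$ actually leaves \emph{five} forms: the four of Proposition~\ref{P:reducible} plus one further newform $h$, which is then discarded by refined elimination at $q=5$; your account does not mention $h$, though it is absorbed into your blanket elimination claim.
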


\begin{proof}[Proof of Theorem~\ref{T:reductionToThm7}]
It suffices to consider $x^{13} + y^{13} = 3 z^n$ with $n = p$ a prime number.

The case $p \not= 7$ is precisely \cite[Theorem 2]{BCDF}, so we can assume $p=7$.

Suppose~$(a,b,c)$ is a non-trivial primitive solution to equation~\eqref{E:13137} with $p = 7$. 
From Theorem~\ref{T:thm7} we can assume that $4 \mid a+b$ and $ 13 \mid a+b$. 

Let $F = F_{a,b}$ be the Frey curve defined over $K$, which was introduced in \cite[\S 7.2]{BCDF}.
Note that $\vv_2(a+b)=\vv_2(3c^p)\ge3$ and $3 \mid a+b$, therefore \cite[Lemma~11]{BCDF}  
gives that $\rhobar_{F,7} \cong \rhobar_{f,\fp}$, 
for some $f \in S_2(\fN)^{\rm new}$ and $\fp \mid 7$ in $\Q_f$.

The elimination technique described in Section~\ref{S:elimination}, applied with the quantities $A_q(f)$ for the rational primes $q=5,7,11,17,31$, 
shows that $\rhobar_{F,7} \not\cong \rhobar_{f,\fp}$ except when $f$ is as in Proposition~\ref{P:reducible}. To complete the proof of the theorem, 
we make use of the refinement explained in Section~\ref{S:elimination} to rule out the remaining four forms. To this end, let $f$ be as in 
Proposition~\ref{P:reducible} (see also Table~\ref{table:coeff-fields}),  and suppose that $\rhobar_{F,7} \cong \rhobar_{f,\fp}$, where $\fp \mid 7$ in $\Q_f$. 
Since $\rhobar_{F,7}$ has conductor $\fN$, the same is true for $\rhobar_{f,\fp}$. Furthermore, $\rhobar_{f,\fp}$
is irreducible (because $\rhobar_{F,7}$ is irreducible by \cite[Theorem~8]{BCDF}) so
by Proposition~\ref{P:reducible} we can assume 
that $\fp$ is unramified in $\Q_f$.
Choosing a $q\not=2,3,13$ satisfying $q\not\equiv1\pmod{13}$, we obtain from \cite[Lemma~8]{BCDF} that
\begin{enumerate}[(i)]
\item either $q\nmid a+b$ and then for all~$\fq$ above~$q$, we have $a_{\fq}(f)\equiv a_{\fq}(F_{a,b})\pmod{\fp}$;
\item or $q\mid a+b$ and then for all~$\fq$ above~$q$, we have $a_{\fq}(f)\equiv \pm(\Norm(\fq)+1)\pmod{\fp}$.
\end{enumerate}
By computing~$a_{\fq}(F_{x,y})$ for each~$\fq\mid q$ and all~$x,y\in\{0,\dots,q-1\}$ not both zero, we eliminate~$f$ by checking that neither of the above 
congruences hold for all unramified primes $\fp \mid 7$ in $\Q_f$. Indeed, the auxiliary prime $q=5$ suffices to eliminate all the four possible~$f$.
\end{proof}

\section{An image of inertia argument with an abelian surface}
\label{S:local}

In this section, we give a proof of Theorem~\ref{T:thm7} based on generalizing an image of inertia argument to the setting of abelian surfaces.

From now on, we let $K = \Q(w)$ where $w^2 = 13$. We let $\calO_K = \Z[u]$, with $u = \frac{1 + w}{2}$,
be the ring of integers of~$K$. We consider the hyperelliptic curve $C$ defined over~$K$ by 
\begin{align}\label{eq:curve}
  C:\, y^2 & = (32 u + 36) x^6 + (24 u + 40) x^5 + (-u - 32) x^4 + (-16 u + 8) x^3 + (17 u - 28) x^2\\
  &\qquad{} + (-6 u + 16) x + 6 u - 16,\nonumber
\end{align}
and we denote its Jacobian by $J$. 

\begin{lemma}\label{lem:cond} The surface $J$ has potentially good reduction at the primes $(2)$ and $(w)$, and we
have $\mathrm{cond}(J) = \fN^2,$ where $\fN = (2^3 w^2)$.
\end{lemma}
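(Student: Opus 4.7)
The plan is to reduce the statement to local computations at the primes of bad reduction, using the explicit Weierstrass equation \eqref{eq:curve} for $C$ over $K$.

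First I would compute the discriminant $\Delta$ of the sextic on the right-hand side of \eqref{eq:curve} and factor the principal ideal $(\Delta)$ in $\calO_K$. A direct calculation should show that $(\Delta)$ is supported only on the two primes $(2)$ and $(w)$ of $K$, giving good reduction of $C$, and hence of $J$, at every other prime of $K$. Thus both claims of the lemma reduce to local statements at $\fp \in \{(2),(w)\}$.

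At each such prime, I would apply the Dokchitser--Doris algorithm for reduction types and conductors of genus~$2$ curves, as implemented in \cite{Dor17}. This algorithm returns the conductor exponent $f_\fp(J)$ together with a description of the action of the inertia subgroup $I_\fp$ on a Tate module $T_\ell J$, for $\ell$ coprime to the residue characteristic of $\fp$; potential good reduction at $\fp$ is equivalent to the image of $I_\fp$ on $T_\ell J$ being finite, which can be read off from the output. The expected values are $f_{(w)}(J)=4$ and $f_{(2)}(J)=6$, together with potential good reduction at both primes. Combining gives
\[
\mathrm{cond}(J) = (2)^{6}(w)^{4} = \bigl((2)^{3}(w)^{2}\bigr)^{2} = \fN^{2},
\]
as required.

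The main obstacle will be the computation at $\fp=(2)$: the residue characteristic is $2$, and the conductor exponent $6$ exceeds the tame bound $2g=4$ for an abelian surface, so wild ramification must contribute and the analysis requires more care than at the odd prime $(w)$. This is exactly the regime the Dokchitser--Doris package is designed to handle. As an independent cross-check, one could attempt to exhibit an explicit finite extension $L_\fp/K_\fp$ at each $\fp\in\{(2),(w)\}$ over which $C$ admits a Weierstrass model with unit discriminant, providing a direct geometric witness of potential good reduction at $\fp$.
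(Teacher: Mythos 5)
Your computational plan is essentially the same as the paper's: the paper uses built-in {\tt Magma} functionality to compute the odd part of the conductor (at $(w)$) and the Dokchitser--Doris algorithm to handle the even part (at $(2)$), yielding $\mathrm{cond}(J) = \fN^2$. Where you diverge is in establishing potential good reduction. You propose to read it off from the inertia action on $T_\ell J$ as output by the Dokchitser--Doris algorithm; the paper instead invokes Liu's classification of stable models of genus $2$ curves (\cite[Th\'eor\`eme 1 (V)]{liu93}) to show that over a finite extension the stable model of $C$ has special fibre a union of two elliptic curves meeting at a point, whence the special fibre of the N\'eron model of $J$ is an abelian variety. Both routes are valid; the paper's is somewhat cleaner conceptually since it avoids having to trust and interpret the fine structure of the algorithm's Galois output, and it exposes the geometric mechanism (the curve degenerates, but its Jacobian does not).

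Your suggested cross-check, however, would fail: you propose to exhibit an extension $L_\fp/K_\fp$ over which $C$ has a Weierstrass model with unit discriminant. But the paper's proof shows precisely that this is impossible at the bad primes: the stable model of $C$ over any sufficiently large extension has a \emph{singular} special fibre (two elliptic curves crossing at a node). So $C$ itself never attains good reduction at $(2)$ or $(w)$, even though $J$ does. This is exactly the phenomenon that makes Liu's result (or a Tate-module computation on the Jacobian) necessary; checking the discriminant of genus~$2$ models over extensions will not certify potential good reduction of $J$ here.
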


\begin{proof} We use \verb|Magma| \cite{magma} to compute the odd part of the conductor of $J$, and
the Dokchitser-Doris algorithm~\cite{DD17} to get the even part. This yields that $\mathrm{cond}(J) = \fN^2$.

Let $\fq$ be either of the primes $(2)$ and $(w)$ of $K$ and consider $C$ over $K_\fq$. By \cite[Th\'eor\`eme 1 (V)]{liu93}, there is a stable model $\mathscr{C}$ of $C$ over an extension $F$ of $K_\fq$ such that the special fiber of $\mathscr{C}$ is a union of two elliptic curves intersecting at a single point.

Let $\mathscr{J}$ be the N\'eron model of the Jacobian of $\mathscr{C}$. By the discussion preceding \cite[Proposition 2]{liu93} and \cite[Proposition 2(v)]{liu93} itself, the special fibre of $\mathscr{J}$ is an abelian variety. Hence, $J$ has potentially good reduction at the primes $(2)$ and $(w)$.
\end{proof}

We record the following additional lemma for later, which also proves more concretely the assertion that $J$ has potentially good reduction at $(w)$. 
Let $K' = \Q(\zeta_{13} + \zeta_{13}^{-1})$ be the maximal totally real subfield of $\Q(\zeta_{13})$. Then $K'/K$ is a cyclic extension 
ramified at $(w)$ only. Let $K_w'$ be the completion of $K'$ at the unique prime above $(w)$. 

\begin{lemma}
\label{lem:good13}The surface $J$ acquires good reduction over $K_{w}'$.
\end{lemma}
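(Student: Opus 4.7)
The plan is to construct an explicit smooth model of $C$ (or of an abelian variety $K_w$-isogenous to $J$) over $\calO_{K'_{w'}}$, using the cluster structure of the defining equation modulo $(w)$.

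A direct inspection of the coefficients of $f(x)$ in~\eqref{eq:curve} shows that each has $w$-adic valuation exactly $1$, so that $f(x) = w \cdot g(x)$ with $g(x) \in \calO_{K_w}[x]$ having unit leading coefficient, and a short reduction computation modulo $(w)$ yields
\[
g(x) \equiv 3 \, (x+1)^3 (x-4)^3 \pmod{(w)}.
\]
Thus the roots of $f$ form two disjoint clusters of size $3$, each of positive depth; this is precisely the cluster picture producing the stable reduction type singled out in Lemma~\ref{lem:cond}. By Hensel's lemma, we can factor $g = c \cdot P(x) Q(x)$ with $c \in \calO_{K_w}^{\times}$ and monic cubics $P, Q \in \calO_{K_w}[x]$ reducing to $(x+1)^3$ and $(x-4)^3$ respectively.

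Fixing the uniformizer $\pi' = 2 - \zeta_{13} - \zeta_{13}^{-1}$ of $K'_{w'}$ and noting that $(\pi')^3 = w \cdot v$ for some unit $v \in \calO_{K'_{w'}}^{\times}$, I would then resolve the two clusters by using the substitutions $(x, y) = (\pi' t - 1,\, (\pi')^3 y_1)$ on one affine chart and $(x, y) = (\pi' s + 4,\, (\pi')^3 y_2)$ on a second chart. A short calculation---using the fact that each cluster has $w$-adic depth exactly $1/3$, which one verifies by showing $v_w(f(-1)) = v_w(f(4)) = 2$---shows that on each chart the transformed equation reduces, modulo $\pi'$, to a smooth Weierstrass cubic $y_i^2 = \alpha_i t^3 + \beta_i$ with $\alpha_i, \beta_i \in \F_{13}^{\times}$. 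Gluing the two charts then yields a regular proper model of $C$ over $\calO_{K'_{w'}}$ whose special fibre is the expected stable one (two elliptic curves meeting at a single point); passing to Jacobians gives good reduction of $J$ over $K'_{w'}$.

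The main technical obstacle is the bookkeeping of the two-chart change of variables and in particular checking the depth and integrality claims above. An alternative, more abstract argument is to exploit the fact that the conductor exponent of $J$ at $(w)$ is $4$ (from Lemma~\ref{lem:cond}) and to combine this with the stable reduction type to identify the image of inertia on $V_\ell(J)$ as cyclic of order exactly $3$; since $K'_{w'}/K_w$ is, up to unramified base change, the unique totally tamely ramified cubic extension of $K_w$, N\'eron--Ogg--Shafarevich then yields good reduction of $J$ over $K'_{w'}$ at once.
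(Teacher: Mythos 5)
Your proposal is correct in spirit but takes a genuinely different and much more involved route than the paper's proof, which is a single line: Magma is invoked to compute the conductor exponent of $J$ at the unique prime of $K'$ above $w$ and returns $0$, whence good reduction. You instead unpack the geometry: the coefficients of $f$ all have $w$-adic valuation exactly $1$ (this checks out, e.g.\ the leading coefficient is $32u+36 = 52 + 16w$ with $v_w = 1$), and the reduction of $g = f/w$ modulo $(w)$ is indeed $3(x+1)^3(x-4)^3$ in $\F_{13}[x]$, giving the cluster picture with two triples. From there your two-chart resolution over $K'_{w'}$ (with $(\pi')^3 \sim w$) and the abstract tame-inertia variant are both reasonable paths to the conclusion. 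What the paper's approach buys is brevity and a uniform black-box treatment (it is the same conductor computation that already gave $\mathrm{cond}(J) = \fN^2$ in Lemma~\ref{lem:cond}); what yours buys is an explicit stable model and a conceptual explanation of \emph{why} the cubic extension $K'_{w'}$ is the right one (it is the unique tame cubic extension up to unramified twist, and the cluster depths force ramification index $3$). Two cautions before this could replace the paper's proof: (i) the step ``depth exactly $1/3$'' needs, beyond $v_w(f(-1)) = v_w(f(4)) = 2$, a check that the residual polynomial of the slope-$1/3$ Newton segment is separable, so that the three roots in each cluster are pairwise at distance $1/3$ and not more tightly clumped; and (ii) in the abstract variant, the conductor exponent $4$ alone does not bound the order of $\rho_\ell(I_w)$ above (it only says $V_\ell(J)^{I_w}=0$ and the Néron special fibre is unipotent), so you still need the cluster/stable-model input to conclude the inertia image has order dividing $3$, after which uniqueness of the index-$3$ open subgroup of the tame quotient finishes the argument via N\'eron--Ogg--Shafarevich.
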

\begin{proof}
Using \verb|Magma| \cite{magma}, the conductor exponent of $J$ at the unique prime of $K'$ above $w$ is computed to be $0$.
\end{proof}

Let $g$ be the Hilbert newform over $K$ with parallel weight $2$, trivial character and level $\fN = (2^3w^2)$
listed in \cite[Proposition~9]{BCDF}. Since $\mathrm{ord}_2(\fN) = 3$ is odd, the local component $\pi_{g,2}$
of the automorphic representation $\pi_g$ attached to $g$ is supercuspidal. Therefore, the Eichler-Shimura conjecture
for totally real fields holds (see~\cite[Proposition 2.20.2]{nek12} or~\cite[Theorem B]{zha01}). Thus, there is an abelian
surface $A_g$ with RM by $\Z[\sqrt{2}]$ attached to $g$. The theorem below shows that $A_g$ is isogenous to $J$.

\begin{theorem} \label{T:modularity}
Let $C$ and $J$ be given by \eqref{eq:curve}. Then, we have the following:
\begin{enumerate}
\item\label{item:1} The ring $\End_{K}(J)$ contains~$\Z[\sqrt{2}]$, i.e.\ $J$ is of $\GL_2$-type with real multiplication (RM) by $\Q(\sqrt{2})$;
\item\label{item:2} The surface~$J$ is modular and corresponds to the Hecke constituent of the Hilbert newform $g$. In other words,
$J$ and $A_g$ are isogenous. 
\end{enumerate}
\end{theorem}

\begin{proof} In \cite[Theorem 17]{ek14}, there is an equation for the Humbert surface $\mathcal{H}_8$ of discriminant~$8$,
which parametrizes principally polarized abelian surfaces with RM by $\Z[\sqrt{2}]$ (but where the action of $\Z[\sqrt{2}]$ by endomorphisms is forgotten), and is birational to the projective plane~$\mathbf{P}^2_{r,s}$.
 
Let $Y_{-}(8)$ be the Hilbert modular surface in \cite{ek14} which is the coarse moduli space which parametrizes principally polarized abelian surfaces with real multiplication by $\Z[\sqrt{2}]$. The loc.\ cit.\ gives the Hilbert modular surface $Y_{-}(8)$ as a double-cover of $\mathbf{P}_{r,s}^2$ and this is a birational model of $Y_{-}(8)$ over $\Q$. Additionally, a birational map from $Y_{-}(8)$ to the moduli space $\mathcal{M}_2$ of genus $2$ curves is described. When this map is defined and evaluated at a point $(A,\phi)$, where $\phi$ is the moduli structure imposed on $A$ by $Y_{-}(8)$,  it gives the Igusa-Clebsch invariants of the genus $2$ curve $C'$ whose Jacobian is $A$.

The point  
$$ (r', s') := \left(\frac{-20u - 24}{81}, \frac{-1456u + 3354}{81}\right)$$
determines the $\overline{K}$-isomorphism class of an abelian surface $A$ arising as the Jacobian of $C'$ whose Igusa-Clebsch invariants are given by
\begin{align*}
I_2' & =  \frac{18112u - 38832}{81},\\ 
I_4' & = \frac{-112736u + 270660}{6561},\\ 
I_6' & = \frac{2386589920u - 5484934104}{531441},\\ 
I_{10}' & = \frac{532320256u - 1222121472}{3486784401}.
\end{align*}
Using {\tt Magma}'s implementation of Mestre's method \cite{mestre}, we can find a hyperelliptic curve $C'$ of genus $2$ defined over $K$ whose Igusa-Clebsch invariants are the $I_{2i}'$ above for $i = 1, 2, 3, 5$.  Hence, $A$ arises from the Jacobian of $C'/K$ and is thus itself defined over $K$.

Let $I_2, I_4, I_6$ and $I_{10}$ be the Igusa-Clebsch invariants of the curve $C$, and $\alpha = -60u- 48$. Then,
we have $I_{2i} = \alpha^{2i}I_{2i}'$ for $i = 1, 2, 3, 5$. This shows that $C/K$ is isomorphic to $C'/K$ over $\overline{K}$.

The points in $Y_{-}(8)$ corresponding to $(r',s')$ are still rational over $K$, hence by the modular interpretation of $Y_{-}(8)$ there is a choice of $A$ in its $\overline{K}$-isomorphism class such that $A$ is defined over $K$ and $A$ has RM by $\Z[\sqrt{2}]$. Since $\Aut(C) \cong \Aut(C')$ is of order $2$, we see $C/K$ and $C'/K$ differ by a quadratic twist. Thus, $\End_K(J)$ also contains $\Z[\sqrt{2}]$ by \cite[Lemma 2.2]{Shnidman}, hence proving~(\ref{item:1}).

Alternatively, the algorithm in \cite{cmsv} can be used to show the endomorphism ring $\End_K(J)$ contains $\Z[\sqrt{2}]$. We thank  N.\ Mascot and the authors of \cite{cmsv} for checking this.

We now prove (\ref{item:2}). Recall that $3$ is inert in $\Q(\sqrt{2})$, and consider the $2$-dimensional 
$3$-adic Galois representation attached to $J$
$$\rho_{J, 3}:\,\Gal(\Qbar/K) \to \GL_2(\Q_3(\sqrt{2})).$$ 
Since $J$ is of $\GL_2$-type, it follows from Ribet \cite[Proposition 3.4]{ribet} (note that the proof generalizes to abelian varieties defined over totally real fields) that the determinant of~\(\rho_{J,3}\) is the cyclotomic character, hence for $\fq \nmid 3\fN$ its characteristic polynomial at~$\fq$ is of the form
$$\mathrm{charpoly}(\rho_{J, 3}(\Frob_\fq)) = x^2 - a_\fq x + \Norm(\fq),$$
where $\tr(\rho_{J, 3}(\Frob_{\fq})) = a_\fq \in \Q(\sqrt{2})$, and $\Norm(\fq)$ is the norm of $\fq$. 

Let $\bar{\rho}_{J, 3}$
be the mod $3$ reduction of this representation. By construction, $\bar{\rho}_{J, 3}$ is odd and we see that the image of $\bar{\rho}_{J, 3}$ 
is contained in 
$$\left\{ u \in \GL_2(\F_9) : \det(u) \in \F_3^\times \right\}$$ and, 
since $-1$ is a square in $\F_9^\times$, the projective image lands in $\PSL_2(\F_9)$.
By computing the orders of the conjugacy classes
of $\bar{\rho}_{J, 3}(\Frob_\fq)$ for the primes $\fq$ above $17$ and $53$, we see that the
projective image of $\bar{\rho}_{J, 3}$ contains elements of orders $2, 4$ and $5$. There
is no proper subgroup of $\PSL_2(\F_9)$ which contains three elements with those orders,
hence the projective image of $\bar{\rho}_{J, 3}$ is $\PSL_2(\F_9)$. In particular, we see that 
the image of $\bar{\rho}_{J, 3}$ contains $\SL_2(\F_9)$, so $\bar{\rho}_{J,3}$  is absolutely 
irreducible. 

The prime $3$ splits in $K$. Writing  $(3) = v_1 v_2$, where $v_1 = (u - 1)$ and $v_2 = (u)$, we get that
 $$\tr(\rho_{J, 3}(\Frob_{v_1})) = 2 \pm \sqrt{2},\,\,\text{and}\,\, \tr(\rho_{J,3}(\Frob_{v_2})) = \pm \sqrt{2}$$
 by computing the Euler factors of the curve $C$ at both places and factoring over $\Q(\sqrt{2})$. 
 These traces are units modulo $3$, so $\rho_{J,3}$ is ordinary at each $v \mid 3$. Further, since $5 \nmid \fN$,
 we see that $\bar{\rho}_{J, 3}|_{I_5}$ is trivial, hence it has odd order. Since $3$ and $5$ have odd 
 ramification indices in $K$, it follows that
 $\bar{\rho}_{J, 3}$ satisfies the conditions of \cite[Theorem 3.2 and Proposition 3.4]{ell05}.
 Hence, it is modular. 
 
We use~\cite[Theorem 3.5.5]{kis09} to conclude that $\rho_{J, 3}$, and hence $J$, is modular. 

By local-global compatibility (\cite[Th\'eor\`eme (A)]{car86}) and Lemma~\ref{lem:cond}, the level of the Hilbert newform 
attached to $J$ is $\fN = (2^3 w^2)$.  There is a unique Hecke constituent of weight $2$ and level $\fN$
whose Euler factors match those of the surface $J$, it is the one corresponding to the newform $g$. 
\end{proof}

The `image of inertia argument' discards the possibility of the mod~$p$ representation of the Frey curve 
and that of a newform $f$ being isomorphic by showing they have different image sizes 
at an inertia subgroup. This idea, originally from \cite[p. 8]{Kraus1998}, has been 
extensively applied and refined~\cite{BennetSkinner}  (see \cite[\S 3]{BCDF} for a description of two refinements) 
in the case of $f$ corresponding to an elliptic curve.
In its essence, this argument boils down to showing that the Frey curve 
and the newform $f$ have different inertial types at some prime~$q$ 
dividing the level of $f$. Thus far, such inertia arguments have been restricted to the case of $f$ corresponding to an elliptic curve because a method to explicitly determine the inertial types of a form~$f$ with non-rational coefficients has not been worked out in general.

In this section, we will use the modularity in Theorem~\ref{T:modularity} to describe  (see Theorem~\ref{T:localIsos}) the inertial type of the non-rational form~$g$ at the prime $(2)$; this together with the 
local information at $(w)$ given by Lemma~\ref{lem:good13} allows for a proof of Theorem~\ref{T:main}.

Before proceeding we need some notation. 
Let $E = E_{1,-1}$ be the Frey curve attached to the trivial solution $(1,-1,0)$ in \cite[Section~7.1]{BCDF}; 
it admits a minimal model given by
$$E:\, y^2 = x^3 - u x^2 + (9u - 25)x - 17u + 49.$$
Let also $J$ be as above and 
$\fp_7$ be the prime of $\Q(\sqrt{2})$ above $7$ generated by $3 + \sqrt{2}$.

Let $K_2/\Q_2$ be the unique unramified quadratic extension
and $K_2^{un}$ its maximal unramified extension in a fixed algebraic closure of~$\Q_2$.
For an abelian variety $A/K_2$ with potentially good 
reduction, there is a minimal extension $M_A/K_2^{un}$ 
where $A$ obtains good reduction. By a result of Serre-Tate
(\cite[\S 2, Corollary~3]{ST1968}), we have 
$M_A = K_2^{un}(A[p])$ for any odd prime $p$. 
 
We recall that the curve $E$ has potentially good reduction at $2$.
By Lemma~\ref{lem:cond}, the same is true for $J$; so
$M_E = K_2^{un}(E[3])$ and~$M_J = K_2^{un}(J[3])$. 

\begin{proposition}\label{P:3torsion} 
We have $M_E = M_J$ and $\Gal(M_E/K_2^{un}) \cong \SL_2(\F_3)$.  
\end{proposition}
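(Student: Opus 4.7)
The plan is to invoke the Serre-Tate criterion to reduce the claim to a comparison of the inertia actions on the $3$-torsion of $E$ and $J$, and then to perform an explicit local analysis at $(2)$.

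By hypothesis $E$ has potentially good reduction at $(2)$ and, by Lemma~\ref{lem:cond}, so does $J$; hence $M_E = K_2^{un}(E[3])$ and $M_J = K_2^{un}(J[3])$. Since $13$ is a non-square unit in $\Z_2$, $K_2$ is the unramified quadratic extension of $\Q_2$. It therefore suffices to compute the two inertia images $\rhobar_{E,3}|_{I_2}\colon I_2 \to \GL_2(\F_3)$ and $\rhobar_{J,3}|_{I_2}\colon I_2 \to \GL_2(\F_9)$ --- with codomain $\GL_2(\F_9)$ for the second because $3$ is inert in $\Z[\sqrt{2}]$, so $J[3]$ is naturally a $2$-dimensional $\F_9$-vector space via the RM --- and to verify that their kernels cut out the same extension of $K_2^{un}$.

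First I would analyse $E$: running Tate's algorithm on the given minimal Weierstrass model over $\calO_{K_2}$ yields the Kodaira type at $(2)$, and the standard classification of inertial images of elliptic curves with potentially good reduction at a $2$-adic place identifies the minimal extension $M_E/K_2^{un}$ over which $E$ acquires good reduction; a direct computation should confirm $\Gal(M_E/K_2^{un}) \cong \SL_2(\F_3)$. For $J$, I would exploit the modularity from Theorem~\ref{T:modularity}: since $\mathrm{ord}_2(\fN) = 3$ is odd, the local component $\pi_{g,2}$ of $\pi_g$ is supercuspidal, and local-global compatibility~\cite{car86} identifies $\WD_2(\rho_{J,\mathfrak p_3})$ with the Weil-Deligne representation attached to $\pi_{g,2}$. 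This pins down $\rhobar_{J,3}|_{I_2}$ up to unramified twist, and matching the resulting $\SL_2(\F_3)$-extension with the one arising from $E[3]$ yields $M_E = M_J$. As a computational cross-check, one could instead work directly with the genus-$2$ model in~\eqref{eq:curve} to compute a defining polynomial for $J[3]$ over $K_2$, and compare its splitting field over $K_2^{un}$ with $K_2^{un}(E[3])$ term-by-term.

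The main obstacle will be the wild ramification at~$2$, which makes both Tate's algorithm for $E$ and the analysis of the supercuspidal inertial type for $J$ more delicate than in the tame case. A convenient way to close the argument is to exhibit a single explicit $\SL_2(\F_3)$-extension $L/K_2^{un}$ over which both $E$ and $C$ admit integral models with smooth special fibre; minimality of $M_E$ and $M_J$ then forces $M_E = M_J = L$.
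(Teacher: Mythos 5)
Your plan is reasonable in outline, but the primary route you propose for $J$ has a genuine gap. Knowing only that $\pi_{g,2}$ is supercuspidal (read off from the conductor exponent at $(2)$ being odd) does \emph{not} determine $\rhobar_{J,3}|_{I_2}$ even up to unramified twist: a supercuspidal representation at a prime above $2$ can be induced from the unramified quadratic extension, induced from a ramified quadratic extension, or exceptional, and these give quite different images of inertia (dihedral vs.\ $\SL_2(\F_3)$). Worse, in this paper the identification of the inertial type of $g$ at $(2)$ as supercuspidal \emph{exceptional} is exactly what Theorem~\ref{T:localIsos} proves, and its proof takes Proposition~\ref{P:3torsion} as an input (it uses the group $\Gal(M_J/K_2^{un}) \cong \SL_2(\F_3)$ to rule out the dihedral cases). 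So deriving Proposition~\ref{P:3torsion} from the local component of $\pi_g$ would be circular as the argument stands.

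Your fallback --- work directly with the genus-2 model, compute a defining polynomial for $K_2(J[3])$, and exhibit a single extension over which both $E$ and $C$ acquire good reduction --- is in fact what the paper does. For $E$, rather than running Tate's algorithm, the paper reads $\Gal(M_E/K_2^{un}) \cong \SL_2(\F_3)$ off the valuations $(\vv_2(c_4(E)), \vv_2(c_6(E)), \vv_2(\Delta(E))) = (5,5,4)$ using the classification of Cali together with Kraus's determination of the semistability defect. For $J$, the Dokchitser--Doris algorithm produces $K_2(J[3])$ as a degree-24 totally ramified extension of $K_2$; one then checks that $E$ has good reduction over $K_2(J[3])$, giving $M_E \subseteq M_J$ by minimality, and concludes $M_E = M_J$ since $[M_E : K_2^{un}] = 24 = [M_J : K_2^{un}]$. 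No modularity of $J$ or local-global compatibility enters at this stage of the paper.
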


\begin{proof} Computing the standard invariants of $E$, we find that they have the following valuations at $(2)$:
\[
(\vv_2(c_4(E)),\vv_2(c_6(E)),\vv_2(\Delta(E)))=(5,5,4).
\]
Hence $\vv_2(j(E))=11$ and~$E$ has potentially good reduction at~$(2)$. It follows from \cite[pp. 675, Corollaire]{Cali} that $E$ has semistability defect $e=24$, hence 
$\Gal(M_E/K_2^{un}) \cong \SL_2(\F_3)$ by~\cite{Kraus1990}.

On the other hand, $J$ also has potentially good reduction at~$(2)$ by Lemma~\ref{lem:cond}, and as a byproduct, the Dokchitser-Doris algorithm~\cite{DD17} returns 
the totally ramified field $K_2(J[3])$ with a defining polynomial over $K_2$ of degree $24$.

We check that $E$ has good reduction over $K_2(J[3])$, so $M_E \subset M_J = K_2^{un}(J[3])$ 
by minimality. Since $[M_E : K_2^{un}] = 24 = [M_J : K_2^{un}]$ the result follows.
\end{proof}

The following well known result will be of use for us; due to a lack of a clear reference 
we include a proof here.

\begin{lemma} \label{L:exceptionalInertia} Let $K$ be a totally real field 
and $\fq$ a prime above 2 in $K$. Let $I_{\fq} \subset G_K$ be an inertia subgroup at ${\fq}$. 
Let $h$ be a Hilbert modular form over $K$ of level $\fN$ and field of coefficients~$\Q_h$. 
Assume that $\fq \mid \fN$ and that $h$ has a supercuspidal exceptional type at $\fq$.

Then, for all primes $p$ coprime to $6\fN$ and  all primes $\fP \mid p$ in $\Q_h$, we have
$\rhobar_{h,\fP}(I_{\fq}) \simeq \SL_2(\F_3)$. 
\end{lemma}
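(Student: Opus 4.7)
The plan is to combine local-global compatibility for Hilbert modular forms with the classification of supercuspidal inertial types for $\GL_2$ in residue characteristic~$2$, then pass to the residual representation using an ``order prime to~$p$'' argument.

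By Carayol's local-global compatibility~\cite{car86} (and its extensions to the general Hilbert setting), the Weil--Deligne representation associated to $\rho_{h,\fP}|_{G_{K_\fq}}$ corresponds under local Langlands to the local component $\pi_{h,\fq}$. Since $\pi_{h,\fq}$ is supercuspidal, the monodromy operator vanishes, so $\rho_{h,\fP}(I_\fq)$ is finite and depends only on the inertial type of $\pi_{h,\fq}$, independently of the choice of~$\fP$. The classification of supercuspidal inertial types for $\GL_2$ in residue characteristic~$2$ (e.g.\ Bushnell--Henniart) then shows that an exceptional (non-dihedral) inertial type has projective image $A_4$ in $\PGL_2(\Qbar_p)$: wild inertia contributes a Klein four-group, tame inertia contributes a further order-$3$ element, and the possible enlargement to $S_4$ in the octahedral case arises only from a Frobenius lift, not from inertia itself. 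Among the finite subgroups of $\GL_2(\Qbar_p)$ whose projectivization equals $A_4$ and whose scalar part is compatible with the central character of $\pi_{h,\fq}$ on $I_\fq$, the unique possibility is the binary tetrahedral group $\SL_2(\F_3)$. Hence $\rho_{h,\fP}(I_\fq) \simeq \SL_2(\F_3)$.

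Finally, since $p$ is coprime to~$6$, we have $\gcd(p, |\SL_2(\F_3)|) = \gcd(p, 24) = 1$. The reduction map on $\GL_2$ of the ring of integers of the completion of $\Q_h$ at~$\fP$ has a pro-$p$ kernel, so its restriction to any finite subgroup of order prime to~$p$ is injective; applied to the image $\rho_{h,\fP}(I_\fq) \simeq \SL_2(\F_3)$, this yields $\rhobar_{h,\fP}(I_\fq) \simeq \SL_2(\F_3)$. The step I expect to require the most care is verifying that the inertia projects exactly onto $A_4$ (and not onto a larger subgroup of $S_4$) in the octahedral case, and then pinning down the image in $\GL_2$ as precisely $\SL_2(\F_3)$ rather than another binary polyhedral group; this is a specific feature of the local classification in residue characteristic~$2$ and would be handled by invoking the explicit description of supercuspidal types due to Bushnell--Henniart.
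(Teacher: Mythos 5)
Your proof is correct and takes essentially the same route as the paper's: local-global compatibility identifies the inertial type, the structure of exceptional supercuspidal types at a prime above $2$ forces the projective inertial image to be $A_4$ (the paper invokes Diamond's Proposition 2.4, suitably extended beyond $\Q_2$), and $A_4$ lifts only to $\SL_2(\F_3)$ inside $\SL_2$ because $A_4$ admits no faithful two-dimensional representation. The only difference is cosmetic: you compute the characteristic-zero inertia image and then reduce mod $p$ using prime-to-$p$ injectivity, while the paper reduces mod $p$ first (using $p\nmid 6$ to preserve the projective image) and runs the inertia argument residually.
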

\begin{proof}
Let $\pi_h$ be the automorphic representation attached to $h$, and $\pi_{h, \fq}$ the local component at~$\fq$. Also, let $\sigma_{h, \fq}: W_{\fq} \to \GL_2(\C)$ 
be the Weil representation attached to $\pi_{h, \fq}$ by the local Langlands correspondence (see~\cite{kut80}). Since $\pi_{h, \fq}$ is a supercuspidal exceptional representation, then $\sigma_{h, \fq}$ is an exceptional representation, which means that the projective 
image of $\sigma_{h,\fq}$ is either $A_4$ or $S_4$ (the $A_5$ case cannot occur since $W_{\fq}$ is solvable).

Let $p$ be a rational prime coprime to $6\fN$ and $\fP\mid p$ a prime in $\Q_h$. Let $D_{\fq}\supset I_{\fq}$ be a decomposition group at~$\fq$ in~$G_K$. By local-global compatibility (\cite[Th\'eor\`eme (A)]{car86}), the projective image of $\rho_{h, \fP}|_{D_{\fq}}$ in $\PGL_2(\overline{\Q}_p)$ is either $A_4$ or $S_4$, and $\rho_{h, \fP}|_{I_{\fq}}$ acts irreducibly  \cite[\S 42.1]{BH06} (in loc.\ cit., supercuspidal exceptional representations are called primitive representations). Since $p \nmid 6$, the image of $\rhobar_{h, \fP}|_{D_{\fq}}$ in $\PGL_2(\overline{\F}_p)$ is also $A_4$ or $S_4$, and $\rhobar_{h, \fP}|_{I_{\fq}}$ acts irreducibly. A careful analysis of the proof of~\cite[Proposition 2.4]{dia95} shows that it carries over to any finite local extension of $\Q_2$.
In particular, this implies that the projective image of $\rhobar_{h, \fP}|_{I_{\fq}}$ is equal to $A_4$. Therefore, the 
image of $\rhobar_{h, \fP}|_{I_{\fq}} \subset \SL_2(\Fbar_p)$ is isomorphic to either $A_4$ or $\SL_2(\F_3)$; the result 
now follows since there is only one element of order $2$ in $\SL_2(\overline{\F}_p)$ (for $p > 2$), hence  
no subgroup of $\SL_2(\overline{\F}_p)$ is isomorphic to $A_4$.  
\end{proof}

\begin{theorem} \label{T:localIsos}
Let $\mathfrak{P} \mid 7$ in $F = \Q(\sqrt{2})$ be a prime. Then, we have
$$M_J = K_2^{un}(J[7]) = K_2^{un}(J[\mathfrak{P}])$$
and, moreover, 
\[
 \rhobar_{g,\mathfrak{P}}|_{I_2} \cong \rhobar_{E,7} |_{I_2}.
\]
Here, $I_{2}$ denotes an inertia subgroup at $(2)$ in~$G_K$.
\end{theorem}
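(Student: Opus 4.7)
The plan is to derive both assertions from the claim that $g$ is of \emph{exceptional supercuspidal} type at $(2)$, meaning that the Weil representation $\sigma_{g,2}$ attached by local Langlands to $\pi_{g,2}$ has projective image $A_4$ or $S_4$. Granting this, Lemma~\ref{L:exceptionalInertia} applies with $p=7$ (valid since $7 \nmid 6\fN$) to yield $\rhobar_{g,\mathfrak{P}}(I_2) \cong \SL_2(\F_3)$ for every prime $\mathfrak{P}\mid 7$ of $F=\Q(\sqrt{2})$. The modularity isomorphism $\rhobar_{g,\mathfrak{P}} \cong \rhobar_{J,\mathfrak{P}}$ from Theorem~\ref{T:modularity} then forces $[K_2^{un}(J[\mathfrak{P}]):K_2^{un}] = 24$. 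Since $K_2^{un}(J[\mathfrak{P}]) \subseteq K_2^{un}(J[7]) = M_J$ by Serre--Tate (using that $J$ has potentially good reduction at $(2)$ by Lemma~\ref{lem:cond}) and $[M_J:K_2^{un}] = 24$ by Proposition~\ref{P:3torsion}, the three fields must coincide, establishing the first assertion.

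The key step, and also the main obstacle, is verifying that $g$ is \emph{exceptional} rather than dihedral supercuspidal; that $\pi_{g,2}$ is supercuspidal is already recorded in the paper, since $\ord_2(\fN) = 3$ is odd. To establish the exceptional type, I would use Theorem~\ref{T:modularity} to identify $\rho_{g,\lambda} \cong \rho_{J,3}$ at a prime $\lambda$ of the coefficient field of $g$ above $3$ (which is inert in $F$), and then invoke Carayol's local--global compatibility~\cite{car86}: the projective images of $\sigma_{g,2}|_{I_2}$ and of $\rho_{g,\lambda}|_{I_2}$ agree. Now Proposition~\ref{P:3torsion} gives $\rho_{J,3}(I_2) = \Gal(M_J/K_2^{un}) \cong \SL_2(\F_3)$, whose projective image is $A_4$. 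Since the projective image of the full $\sigma_{g,2}$ contains $A_4$ and the local Weil group at $(2)$ is pro-solvable, this projective image must be $A_4$ or $S_4$, as required.

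For the final inertial isomorphism, note that both $\rhobar_{g,\mathfrak{P}}|_{I_2}$ and $\rhobar_{E,7}|_{I_2}$ factor through $\Gal(M_J/K_2^{un}) \cong \SL_2(\F_3)$ as faithful $2$-dimensional $\F_7$-representations; for $E$ this uses $M_E = M_J$ from Proposition~\ref{P:3torsion} combined with the Serre--Tate equality $M_E = K_2^{un}(E[7])$. Both representations have trivial determinant on $I_2$: for $E$ via $\det \rhobar_{E,7} = \chi_7$ and $\chi_7|_{I_2}=1$, and for $g$ via the trivial Nebentypus. Because $\mu_3 \subset \F_7^{\times}$, the three faithful $2$-dimensional irreducible representations of $\SL_2(\F_3)$ are all defined over $\F_7$, and they differ only by twists by the nontrivial characters of $\SL_2(\F_3)^{\mathrm{ab}} \cong \Z/3$. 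A short determinant calculation then shows that exactly one of these three has trivial determinant. Hence both residual representations are isomorphic to this distinguished one, and therefore to each other.
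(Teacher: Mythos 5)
Your proof is correct, and while it reaches the same conclusion by the same broad outline (show $g$ is exceptional supercuspidal at $(2)$, apply Lemma~\ref{L:exceptionalInertia}, compare degrees, then identify the two inertial representations), you deviate from the paper in three places, each defensible.

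First, for showing that $g$ is exceptional at $(2)$, the paper works directly with the $\fP$-adic representation at $p=7$: it observes that $\rho_{g,\fP}|_{I_2}$ reducible would force conductor exponent $1$ or an even number at $(2)$, contradicting $\ord_2(\fN)=3$; it then rules out the dihedral case because the image of $\rhobar_{g,\fP}|_{I_2}$ is a quotient of $\Gal(K_2^{un}(J[7])/K_2^{un})\cong\SL_2(\F_3)$, which has no dihedral quotients. You instead pass to $\lambda\mid 3$, use $\rho_{g,\lambda}\cong\rho_{J,3}$ from Theorem~\ref{T:modularity}, invoke Carayol at $(2)$, and read off the projective image $A_4$ directly from $\rho_{J,3}(I_2)\cong\Gal(M_J/K_2^{un})\cong\SL_2(\F_3)$ (Proposition~\ref{P:3torsion}). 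Both routes are sound; yours avoids the conductor-parity step but is logically equivalent in substance, since what really drives both is that the image of inertia is known to be $\SL_2(\F_3)$ from the abelian surface side.

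Second, for the field equality $M_J=K_2^{un}(J[\fP])$, the paper decomposes $J[7]=J[\fP]\times J[\fP']$ via Wilson's thesis and notes $M_J$ is the compositum of the two subfields; you sidestep this by using only the trivial inclusion $K_2^{un}(J[\fP])\subseteq K_2^{un}(J[7])=M_J$ together with the degree count $24=24$. Your version is leaner and doesn't need the splitting of $J[7]$.

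Third, for the final isomorphism $\rhobar_{g,\fP}|_{I_2}\cong\rhobar_{E,7}|_{I_2}$, the paper establishes that both representations have identical kernel and image $\SL_2(\F_3)\hookrightarrow\GL_2(\F_7)$ and then cites \cite[Lemma~2]{F33p}. You make this self-contained: both factor faithfully through $\Gal(M_J/K_2^{un})\cong\SL_2(\F_3)$, both have trivial determinant on inertia, the three faithful $2$-dimensional irreducibles of $\SL_2(\F_3)$ over $\F_7$ differ by twists by characters of $\Z/3$, and exactly one has trivial determinant. This is essentially the content of the cited lemma, unpacked; it is correct and arguably more transparent. Overall, a valid proof with minor simplifications relative to the paper's.
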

\begin{proof} Let $\mathfrak{P}, \mathfrak{P}'$ be the two primes above $7$ in $\Q(\sqrt{2})$. By \cite[Theorem 4.3.1]{wilson98}, we have that
$$J[7] = J[\mathfrak{P}] \times J[\mathfrak{P}'].$$
This means that the field $M_J = K_2^{un}(J[7])$ is the compositum of $K_2^{un}(J[\mathfrak{P}])$ and $K_2^{un}(J[\mathfrak{P}'])$, the fields cut out by $\rhobar_{J, \mathfrak{P}}|_{I_2}$ and $\rhobar_{J, \mathfrak{P}'}|_{I_2}$ respectively. 

We will now show that these three fields have the same degree and the first statement follows. 
By Theorem~\ref{T:modularity}, we have
$\rhobar_{J, \mathfrak{P}} \cong \rhobar_{g, \mathfrak{P}}$ and $\rhobar_{J, \mathfrak{P}'} \cong \rhobar_{g, \mathfrak{P}'}$. By Proposition~\ref{P:3torsion} and the discussion preceding it, we only need
to show that the fields cut out by $\rhobar_{g, \mathfrak{P}}|_{I_2}$ and $\rhobar_{g, \mathfrak{P}'}|_{I_2}$ have degree $\# \SL_2(\F_3) = 24$. 

Note that if $\rho_{g,\mathfrak{P}}|_{I_2}$ is reducible then the conductor exponent at $2$ is either 1 (special representation) or even (because the determinant of $\rho_{g,\mathfrak{P}}$ is cyclotomic, and hence on restriction to inertia, the diagonal characters must be inverses of each other); therefore, 
$\rho_{g,\mathfrak{P}}|_{I_2}$ is irreducible and $g$ has supercuspidal type at $2$ which is not given by an induction from the unramified quadratic extension. We conclude that $g$ is either supercuspidal induced from a ramified extension or exceptional. In the former case, then $\rhobar_{g,\mathfrak{P}}|_{I_2}$ would have projective dihedral image; since the field cut out by $\rhobar_{g,\mathfrak{P}}|_{I_2}$ is a Galois subextension of $K_2^{un}(J[7])/K_2^{un}$, which has Galois group $\SL_2(\F_3)$, and $\SL_2(\F_3)$ does not have any quotients which are dihedral, we conclude that $g$ must have a supercuspidal exceptional type at $2$. By Lemma~\ref{L:exceptionalInertia}, we obtain that the fields cut out by $\rhobar_{g,\mathfrak{P}}|_{I_2}$ have degree $\# \SL_2(\F_3) = 24$ as required.

We now prove the last statement. From the first part of the theorem, 
Theorem~\ref{T:modularity} and Proposition~\ref{P:3torsion}, we have that $M_J$
is the field cut out by both $\rhobar_{g,\mathfrak{P}}|_{I_2}$ and
$\rhobar_{E,7}|_{I_2}$. Thus, $\rhobar_{g,\mathfrak{P}}|_{I_2}$,
and $\rhobar_{E,7}|_{I_2}$ have the same kernel, and image $\SL_2(\F_3) \hookrightarrow \SL_2(\F_7) \subset \GL_2(\F_7)$. 
Therefore, it follows from \cite[Lemma~2]{F33p} that they are isomorphic representations.
\end{proof}

Finally, we now show Theorem~\ref{T:thm7} using the information on the inertial types.

\begin{proof}[First proof of Theorem~\ref{T:thm7}]

Let~$(a,b,c)$ be a non-trivial primitive solution to equation~\eqref{E:13137} with $n = p = 7$. 

Let $E_{a,b}$ be the Frey curve defined in \cite[Section~7.1]{BCDF}.
The proof of \cite[Theorem 7]{BCDF} uses \cite[Proposition 9]{BCDF} which asserts that 
$\rhobar_{E_{a,b},p} \cong \rhobar_{Z,p}$ where $Z$ is one of $E_{1,-1}$, $E_{1,0}$, $E_{1,1}$. In the case $p = 7$, there is the additional possibility that $\rhobar_{E_{a,b},7} \cong \rhobar_{g,\mathfrak{P}}$ where $\mathfrak{P} \mid 7$ in $\Q(\sqrt{2})$. 
By \cite[Remark 7.4]{BCDF} we have $\mathfrak{P} = \fp_7$. 

Suppose $Z$ is one of the three curves above, the arguments of \cite[Theorem 7]{BCDF} still hold for~$p=7$. 
For instance, if $Z = E_{1,-1}$ and $4 \nmid a + b$, then it is shown that 
\begin{equation}
\label{E:isom}
  \rhobar_{E_{a,b},p} \mid_{I_2} \not \cong \rhobar_{Z,p} \mid_{I_2}. 
\end{equation}
To complete the proof, we only
have to eliminate the possibility that $\rhobar_{E_{a,b},7} \cong \rhobar_{g,\fp_7}$.

By Theorem~\ref{T:modularity}, we have $\rhobar_{g,\fp_7} \cong \rhobar_{J,\fp_7}$. The proof of \cite[Theorem~7~(A)]{BCDF} then remains valid by simply replacing $Z/K'$ with $J/K_w'$ and using Lemma \ref{lem:good13}.
This shows $13 \mid a+b$.

Finally, we note that ${\rhobar_{g,\fp_7}} {\mid_{I_2}} \cong {\rhobar_{E_{1,-1},7}} {\mid_{I_2}}$ by Theorem~\ref{T:localIsos}, and that  ${\rhobar_{E_{1,-1},7}} {\mid_{I_2}} \not\cong {\rhobar_{E_{a,b},7}} {\mid_{I_2}}$ by \eqref{E:isom}, for $4 \nmid a + b$. This shows that $4 \mid a+b$.
\end{proof}

\begin{remark} The obstruction to solving~\eqref{fte:13-13-7} comes from an abelian surface with real multiplication; namely, the surface $A_g$ attached to the form $g$. The approach in this section relies crucially on the fact that $A_g$ is isogenous over $\Q(\sqrt{13})$ to a principally polarized abelian surface with real multiplication, i.e. $A_g$ is a $\Q(\sqrt{13})$-rational point on some Humbert surface. Hence, the methods in~\cite[\S 4.1.2]{dk16} and~\cite[\S 7]{ek14} can be applied to explicitly find a hyperelliptic curve $C$ such that $A_g$ is isogenous over $\Q(\sqrt{13})$ to the Jacobian $J$ of $C$. 
There is thus a reasonable chance for the method in this section to succeed whenever the obstruction to the modular method for solving a Diophantine equation like the one in~\eqref{fte:13-13-7} is isogenous to a principally polarized abelian surface with real multiplication and reasonably-sized height.
\end{remark}

\begin{remark}
Although we apply this generalized inertia argument to a situation with fixed exponent $p = 7$, the method described is also applicable to a setting with general exponent~$p$ (unlike the methods in Sections~\ref{sec:global-proof}-\ref{sec:descent-unit}). This can be useful when working with certain Frey hyperelliptic curves as in the Darmon program for the Generalized Fermat equation~\cite{DarmonDuke}.

For example, in forthcoming work \cite{CK}, the equation $x^p + y^p = z^5$ is studied using a Frey hyperelliptic curve $C_5^- = C_5^-(a,b,c)$ 
constructed by Darmon (see~\cite[p. 425]{DarmonDuke}), where $(a,b,c)$ is a non-trivial primitive solution satisfying $2 \mid ab$, $5 \nmid ab$. 
The Jacobian $J_5^- = J_5^-(a,b,c)$ of~$C_5^-$ is defined over~$\Q$ and becomes of~$\GL_2$-type over~$F = \Q(\sqrt{5})$ with real multiplication by~$F$. In applying the modular method in this case, 
one finds that it is not possible to rule out an isomorphism 
\begin{equation}
\label{cong-iso}
  \rhobar_{J_5^-,\fP} \cong \rhobar_{h,\fP}
\end{equation}
by comparing traces of Frobenius at primes not dividing $10$,  where $h$ is a certain Hilbert newform of parallel weight $2$, trivial character, and level $(2^4 \mathfrak{r}^2)$ over $F$; here, $\fP$ is a prime above $p$, $\mathfrak{r}$ is the prime above $5$ of $F$, and note that~$2$ is inert in~$F$. By modularity and comparing a few traces, one checks that the Hilbert newform $h$ corresponds to $J_5^-(8,-8,0)$ twisted by the quadratic character associated to $F(\sqrt{2})$. Applying the Dokchitser-Doris algorithm~\cite{DD17}, we deduce that $J_5^-(8,-8,0)$ achieves good reduction over a degree $10$ totally ramified extension of $F_{2}^{un}$  (see~\cite{programs}), so $\rhobar_{h,\fP}(I_2)$ has order dividing~$20$ and divisible by $5$. Moreover, it is shown in \cite{CK} that $J_5^-(a,b,c)$ has potentially multiplicative reduction at~$2$ when $2 \mid ab$,  $5 \nmid ab$ for $p > 5$.  Therefore, this implies that~$\rhobar_{J_5^-,\fP}$ is a quadratic twist of Steinberg at~$2$ and hence~$\rhobar_{J_5^-,\fP}(I_2)$ has order dividing~$2p$. We thus obtain a contradiction to isomorphism \eqref{cong-iso} when $p > 5$.
\end{remark}

\section{A residual multiplicity one argument} 
\label{sec:global-proof}

In this section, we outline a proof of Theorem~\ref{T:thm7} based on establishing the mod $7$ congruence mentioned in the introduction and described below; see also ~\cite[Remark 7.4]{BCDF} for more details. 

The natural attempt to use a Sturm bound such as the one in \cite{BP} fails because it necessitates the computation of Hecke eigenvalues for primes of $K = \Q(\sqrt{13})$ of norm at least $10^6$, which is not computationally feasible.

The method we introduce for establishing $\bar{\rho}_{g, \fp_7} \cong \bar{\rho}_{f, 7}$ below, without the use of a Sturm bound, relies on a residual multiplicity one argument. It applies for more general $f$, $g$ and~$p$. For example, it has also been used in~\cite{dem20} to show the isomorphism of some residual mod~$2$ Galois representations arising from Hilbert modular forms over the maximal totally real subfield of $\Q(\zeta_{32})$, the cyclotomic field of $32$nd root of unity.

Let $E = E_{1,-1}$ and $\fp_7$ be as in the previous section; in particular, we have $\F_{\fp_7} = \F_7$. 

Let $S_2(\fN)^{\rm new}$ be the new subspace of Hilbert cusp forms of weight $2$, trivial character, and level $\fN  =(2^3w^2)$.  The elliptic curve $E$ is modular by \cite{FLHS} and corresponds to a newform $f \in S_2(\fN)^{\rm new}$. Similarly, in Theorem~\ref{T:modularity}, we prove that the surface $J$ is modular and corresponds to the form $g \in S_2(\fN)^{\rm new}$. 
So, we have $\rhobar_{f, 7} \simeq \rhobar_{E, 7}$ and $\rhobar_{g, \fp_7} \simeq \rhobar_{J, \fp_7}$.

Let $D = \left(\frac{-1,-1}{K}\right)$ be the totally definite quaternion algebra over $K$ ramified at both places at infinity only. Let $\calO_D$ be a maximal order in $D$. Also, let $\widehat{D} = D\otimes \widehat{\Z}$, $\widehat{\calO}_K=\calO_K\otimes\widehat{\Z}$, and $\widehat{\calO}_D = \calO_D\otimes \widehat{\Z}$, where
$\widehat{\Z}$ is the profinite completion of~$\Z$. We fix an isomorphism $\left(\widehat{\calO}_D\right)^\times \simeq \GL_2(\widehat{\calO}_K)$, and we define the compact open subgroup
\begin{eqnarray*}
U_0(\fN) := \left\{\gamma \in \left(\widehat{\calO}_D\right)^\times: \gamma \equiv \begin{pmatrix} \ast & \ast \\ 0& \ast\end{pmatrix} \mod \fN \right\}.
\end{eqnarray*}

We consider the space
$$S_2^D(\fN) = \left\{f:\,D^\times \backslash \widehat{D}^\times/U_0(\fN) \to \C\right\}$$
which has an action of Hecke operators $T_\fq$ for $\fq \nmid \fN$.

Let $\T_\fN$ be the Hecke algebra acting on~$S_2^D(\fN)$ which is generated by the operators $T_\fq$. There is a decomposition
\begin{equation*}
   S_2^D(\fN) = S_2^D(\fN)^{\rm old} \oplus S_2^D(\fN)^{\rm new}.
\end{equation*}
where the subspaces $S_2^D(\fN)^{\rm old}$ and $S_2^D(\fN)^{\rm new}$ are $\T_\fN$-stable, and $S_2^D(\fN)^{\rm new}$ is the orthogonal complement of $S_2^D(\fN)^{\rm old}$ under a certain inner product defined on $S_2^D(\fN)$.

The  $\Z$-submodule of $S_2^D(\fN)$ given by
$$S_2^D(\fN, \Z) = \left\{f:\,D^\times \backslash \widehat{D}^\times/U_0(\fN) \to \Z\right\},$$
is stable under $\T_\fN$ and generates $S_2^D(\fN)$ over $\C$ (i.e.\ is an integral structure for $S_2^D(\fN)$).

Let $S_2^D(\fN,\Z)^{\rm new}$ be the orthogonal projection of $S_2^D(\fN,\Z)$ to $S_2^D(\fN)^{\rm new}$. Then $S_2^D(\fN,\Z)^{\rm new}$ is an integral structure for $S_2^D(\fN)^{\rm new}$; in particular, the matrices representing the action of 
the $T_\fq$ on $S_2^D(\fN)^{\rm new}$ have
integer coefficients.

For any commutative ring with unity $A$, define
\begin{align*} 
   & S_2^D(\fN,A) := S_2^D(\fN,\Z) \otimes A \\
   & S_2^D(\fN,A)^{\rm new} := S_2^D(\fN,\Z)^{\rm new} \otimes A.
\end{align*}

We recall that, since $[K:\Q] = 2$ is even, the Jacquet-Langlands correspondence implies that there is an isomorphism of Hecke modules $\JL: S_2^D(\fN) \simeq S_2(\fN)$, which maps
$S_2^D(\fN)^{\rm new}$ onto $S_2(\fN)^{\rm new}$. We let $\phi = \JL^{-1}(f)$ and $\psi = \JL^{-1}(g)$.

\begin{proposition} \label{P:mod7Iso} Consider the residual representations
$$\bar{\rho}_{g, \fp_7}:\,\Gal(\Qbar/K) \to \GL_2(\F_7) \quad \text{ and } \quad \bar{\rho}_{f, 7}:\,\Gal(\Qbar/K) \to \GL_2(\F_7).$$
Then, we have $\bar{\rho}_{g, \fp_7} \cong \bar{\rho}_{f, 7}$. 
\end{proposition}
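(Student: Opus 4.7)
The plan is to transfer the desired congruence to the definite quaternionic side via Jacquet--Langlands and establish it by a residual multiplicity one argument inside the finite-dimensional $\F_7$-vector space $S_2^D(\fN, \F_7)^{\rm new}$. This avoids any Sturm-type comparison, which here would require Hecke eigenvalues of $f$ and $g$ up to primes of norm of order $10^6$.

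First I would normalise $\phi = \JL^{-1}(f)$ to lie in $S_2^D(\fN, \Z)^{\rm new}$ with nonzero reduction $\bar\phi \in S_2^D(\fN, \F_7)^{\rm new}$, and $\psi = \JL^{-1}(g)$ to lie in $S_2^D(\fN, \Z[\sqrt{2}])^{\rm new}$ with nonzero reduction $\bar\psi \in S_2^D(\fN, \F_7)^{\rm new}$ via the identification $\Z[\sqrt{2}]/\fp_7 \cong \F_7$. Since $\rhobar_{f,7} \cong \rhobar_{E,7}$ is absolutely irreducible (as $E = E_{1,-1}$ carries no $K$-rational subgroup of order $7$, which is the standing hypothesis for the modular method applied to $E$ in \cite{BCDF}), the residual multiplicity one theorem for quaternionic Hilbert modular forms in the style of Jarvis and Fujiwara, applicable in view of the supercuspidal local type of $g$ at $(2)$ identified in Section~\ref{S:local}, ensures that $S_2^D(\fN, \F_7)^{\rm new}[\mm_f]$ is one-dimensional over $\F_7$; here $\mm_f \subset \T_\fN$ denotes the maximal ideal of residue characteristic $7$ cut out by the eigenvalues of $\bar\phi$.

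It then suffices to show that $\bar\psi$ belongs to $S_2^D(\fN, \F_7)^{\rm new}[\mm_f]$. For this I would compute $\T_\fN/7\T_\fN$ explicitly in \texttt{Magma} as a finite $\F_7$-algebra, isolate the maximal ideal $\mm_f$, and extract a small generating set of $\mm_f$ of the form $\{T_{\fq_i} - a_{\fq_i}(f) \bmod 7\}$ with primes $\fq_i$ of small norm. The corresponding finite list of congruences $a_{\fq_i}(g) \equiv a_{\fq_i}(f) \pmod{\fp_7}$ is then a direct computation. Once verified, $\bar\psi$ and $\bar\phi$ lie in the same one-dimensional eigenspace, hence are $\F_7$-proportional. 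Applying any Hecke operator $T_\fq$ to the proportionality immediately yields $a_\fq(g) \equiv a_\fq(f) \pmod{\fp_7}$ for every prime $\fq \nmid 7\fN$, and Brauer--Nesbitt combined with Chebotarev density, using the absolute irreducibility established above, then produces the isomorphism $\rhobar_{f,7} \cong \rhobar_{g,\fp_7}$.

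The principal difficulty will be securing the applicability of quaternionic multiplicity one for the non-squarefree level $\fN = (2^3 w^2)$, where the local component of $g$ at $(2)$ is supercuspidal exceptional, the subtlest case. Verifying the hypotheses of a quaternionic multiplicity one statement concretely at both $(2)$ and $(w)$ is the technical heart of the argument. A secondary, purely bookkeeping concern is producing an integral normalisation of $\psi$ with $\Z[\sqrt{2}]$-coefficients compatible with the $\Z$-integral structure on $S_2^D(\fN, \Z)^{\rm new}$, so that the proportionality check of $\bar\phi$ and $\bar\psi$ inside the common $\F_7$-module is a well-defined statement.
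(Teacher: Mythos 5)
Your proposal has the right overall shape — pass to the definite quaternion algebra via Jacquet--Langlands, and use a multiplicity one phenomenon in $S_2^D(\fN,\F_7)^{\rm new}$ to reduce the congruence $\rhobar_{g,\fp_7} \cong \rhobar_{f,7}$ to finitely many explicit eigenvalue comparisons. That much matches the paper. But there are two genuine gaps, one of which you flag yourself without resolving.

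First, you rely on a general residual multiplicity one theorem ``in the style of Jarvis and Fujiwara'' to conclude $\dim_{\F_7} S_2^D(\fN,\F_7)^{\rm new}[\mm_f] = 1$. As you note, the level $\fN = (2^3 w^2)$ is not squarefree, the local component of $g$ at $(2)$ is supercuspidal exceptional, and $p=7$ is split in $K = \Q(\sqrt{13})$. Pinning down a theorem in the literature that covers exactly this configuration — and verifying its hypotheses — is not a formality; it is an unsolved part of your argument. The paper does not invoke any such theorem. Instead, it fixes a \emph{finite} subalgebra $\T' \subseteq \T$, namely the one generated by the $T_\fq$ with $\fq \nmid 7\fN$ of norm at most $200$, and computes the socle $S$ of $S_2^D(\fN,\F_7)^{\rm new}$ as a $\T'$-module outright, verifying by computation that all $34$ one-dimensional simple constituents appear with multiplicity one. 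Multiplicity one is a computed fact about this explicit module, not a cited theorem; this is what makes the argument robust against the subtleties of the local type at $(2)$.

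Second, and more quietly, your plan to ``compute $\T_\fN/7\T_\fN$ explicitly'' and extract a generating set for $\mm_f$ of the form $\{T_{\fq_i} - a_{\fq_i}(f)\}$ reintroduces the very bound problem you set out to avoid. To know that a finite collection of $T_{\fq}$ generates the image of $\T_\fN$ in $\End_{\F_7}(S_2^D(\fN,\F_7)^{\rm new})$, you need some effective control — essentially a Sturm bound again. The paper's device of working with the socle of a \emph{fixed} small subalgebra $\T'$ elegantly avoids this: $\bar\phi$ and $\bar\psi$ generate simple $\T'$-submodules $W_{\bar\phi}$, $W_{\bar\psi}$ of the socle $S$; the eigenvalue comparison for the finitely many $\fq$ generating $\T'$ shows $W_{\bar\phi} \cong W_{\bar\psi}$ as $\T'$-modules; the computed multiplicity one for $S$ then forces $W_{\bar\phi} = W_{\bar\psi}$ as subspaces; and since both are stable under the \emph{full} Hecke algebra $\T$, equality as subspaces yields $a_\fq(f) \equiv a_\fq(g) \pmod{\fp_7}$ for every $\fq \nmid 7\fN$, with no bound on $\Norm(\fq)$ ever needed. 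You should rework your argument along these lines rather than appealing to an abstract multiplicity one theorem and to a generating set for $\mm_f$.
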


\begin{proof} Recall that the coefficient field of~$f$ is $\Q$ and that of $g$ is $L = \Q(\sqrt{2})$. Let $\calO_L = \Z[\sqrt{2}]$ and $\calO_{L, \fp_7}$ be the completion of $\calO_L$ at
$\fp_7$.  Let
$$\theta:\,S_2^D(\fN, \calO_{L, \fp_7})^{\rm new} \twoheadrightarrow S_2^D(\fN, \F_7)^{\rm new}$$
be the natural reduction map.

Up to scaling, we may assume $\phi$ and $\psi$ are elements in $S_2^D(\fN,\calO_{L,\fp_7})^{\rm new}$ and their reductions $\bar{\phi} = \theta(\phi)$ and $\bar{\psi} = \theta(\psi)$ are non-zero elements in $S_2^D(\fN,\F_7)^{\rm new}$. Hence, $\bar \phi$ and $\bar \psi$ are eigenvectors of $T_\fq \pmod{7}$ for all $\fq \nmid 7 \fN$.

Let $\T \subseteq \End_{\F_7}(S_2^D(\fN, \F_7)^{\rm new})$ be the Hecke algebra generated by the operators $T_\fq \pmod{7}$ for all primes $\fq \nmid 7\fN$, and $W_{\bar{\phi}}$ (resp.\ $W_{\bar{\psi}}$) be the $\T$-submodule generated by $\bar \phi$ (resp.\ $\bar \psi$) in $S_2^D(\fN,\F_7)^{\rm new}$.

Let $\T' \subseteq \T$ be the subalgebra generated by the Hecke operators $T_\fq$ for prime ideals $\fq \nmid 7\fN$ of norm up to $43$, and  $S$ the socle 
of $S_2^D(\fN, \F_7)^{\rm new}$ considered as a $\T'$-module, i.e.\ the sum of all simple $\T'$-submodules of $S_2^D(\fN, \F_7)^{\rm new}$, which is semi-simple.

Since both $\bar{\phi}$ and $\bar{\psi}$ are eigenvectors for $\T'$, $W_{\bar \phi}$ and $W_{\bar \psi}$ are simple $\T'$-modules of dimension one over $\F_7$. 
Hence, $W_{\bar \phi}$ and $W_{\bar \psi}$ are contained in $S$. They are isomorphic as $\T'$-modules as  for all  prime ideals $\fq \nmid 7\fN$ of norm up to $43$, 
we have that $a_\fq(f) \pmod{7} = a_\fq(g) \pmod{\fp_7}$ as elements in $\F_7$, where $a_\fq(u)$ denotes the eigenvalue of $T_\fq$ acting on an eigenvector $u \in S_2^D(\fN)$.

Using \verb|Magma| \cite{magma}, we can compute the $\T'$-module $S$, which has dimension $348$ over $\F_7$. There are $34$ (non-isomorphic) simple constituents 
which have dimension one over $\F_7$, and each appears with multiplicity one. Thus, $W_{\bar \phi} = W_{\bar \psi}$ inside $S \subseteq S_2^D(\fN, \F_7)^{\rm new}$. 
Since $W_{\bar \phi} = W_{\bar \psi}$ inside $S_2^D(\fN, \F_7)^{\rm new}$ and both are $\T$-modules as well, we obtain that $a_\fq(f) \pmod{7} = a_\fq(g) \pmod{\fp_7}$ 
as elements of $\F_7$, for all prime ideals $\fq \nmid 7 \fN$.

\end{proof}

\begin{proof}[Second proof of Theorem~\ref{T:thm7}]
The bulk of the argument of the proof of this theorem given in Section~\ref{S:local}, including the identity \eqref{E:isom}, still applies. 

Therefore, we only need to show that the isomorphism $\rhobar_{E_{a,b},7} \cong \rhobar_{g,\fp_7}$ is not possible.  Suppose we have that $\rhobar_{E_{a,b},7} \cong \rhobar_{g,\fp_7}$. Then by Proposition~\ref{P:mod7Iso}, we have 
\[ \rhobar_{E_{a,b},7} \cong \rhobar_{g,\fp_7} \cong \bar{\rho}_{E_{1,-1}, 7}.\]
Now the arguments that eliminated~$E_{1,-1}$ apply to eliminate $g$, thereby completing the proof.
\end{proof}

\section{A unit sieve argument} 
\label{sec:descent-unit}
In this section, we give a proof of Theorem~\ref{T:thm7} based on combining the study of units in classical descent with
local restrictions on the solutions coming from the multi-Frey approach in~\cite{BCDF}. 

Let $\zeta$ be a primitive 13th root of unity. Suppose $(a,b,c)$ is a nontrivial 
primitive solution to~\eqref{fte:13-13-7}. We then have the factorization in $\Z[\zeta]$,
\begin{align*}
  a^{13} + b^{13} & = (a+b) \prod_{i=1}^{12} (a + \zeta^i b) = 3 c^7.
\end{align*}
The integers \(a + b\) and~\(\frac{a^{13} + b^{13}}{a + b}\) are coprime away from~\(13\) (see, e.g., \cite[Lemma~2.1]{DahmenSiksek}). Let~\(\ell\not\equiv 1\pmod{13}\) be a prime number dividing~\(a^{13} + b^{13}\). Since~\(a\) and~\(b\) are coprime, we have that~\(\ell\nmid ab\) and hence there exists an integer~\(b'\) such that~\(bb'\equiv -1 \pmod{\ell}\).  Therefore we have~\((ab')^{13}\equiv 1\pmod{\ell}\) and since~\(\ell\not\equiv 1\pmod{13}\), it follows that~\(\ell\) divides~\(a + b\). In particular, we have that~\(3\nmid a + \zeta b\). Furthermore, by classical descent, we have that
\begin{equation}
\label{unit-equation}
  a + \zeta b = \begin{cases}
    \epsilon \beta^7  & \text{ if } 13 \nmid a+b \\
    \epsilon (1 - \zeta) \beta^7 & \text{ if } 13 \mid a+b,
   \end{cases}
\end{equation}
where $\epsilon$ is a unit of $\Z[\zeta]$ and $\beta \in \Z[\zeta]$.
We only need to consider $\epsilon$ up to 7th powers, which means that there are initially $16,807$ possible choices for $\epsilon$.

One can now reduce~\eqref{unit-equation} modulo a prime $\fq$ of $\Z[\zeta]$ above the rational 
prime~$q$. If $q$ is such that the order of the 
multiplicative group of the residue field at~$\fq$ is divisible 
by~$7$, the condition of being a 7th power is nontrivial. 

The primes $q = 11, 17, 23, 29, 37, 41$ satisfy the condition above; moreover, we can obtain local information from the modular method: if $\rhobar_{E_{a,b},7} \cong \rhobar_{g,\fp_7}$, locally at~$q$ this congruence imposes
constraints on the solutions $(a,b)$ modulo $q$ (in both cases of good or multiplicative reduction of $E_{a,b}$ at $q$) and hence on the unit $\epsilon$.

Note we cannot obtain information from $q=2$ in the same way,  
because $2$ divides the level of~$g$. However, since 
the multiplicative group of the residual field of $K$ at~$\fq=(2)$ has
order a multiple of 7, this together with the assumption 
$4 \nmid a + b$ 
also imposes restrictions on~$\epsilon$.

\begin{proof}[Third proof of Theorem~\ref{T:thm7}]

We first note the condition $4 \nmid a+b$ 
is equivalent to $2 \nmid a + b$ for our equation.
Now we sieve the set of possible units in the two cases of equation~\eqref{unit-equation}, 
assuming $2 \nmid a+b$ for a contradiction. More precisely, 
in both the cases $13 \nmid a+b$ and $13 \mid a+b$, using the primes
$q = 2, 11, 19, 23$ the set of units which passes all local conditions is empty. 
Thus $4 \mid a+b$, as desired.

We will now prove that $13 \mid a+b$. From the above we can assume
$4 \mid a + b$ and $13 \nmid a + b$ for a contradiction. 
Using the local information at the primes $q = 2, 11, 19, 23, 29, 41$, again the set of units which passes all local conditions is empty, concluding the proof.
\end{proof}

\begin{remark}
This method of sieving units also appears in \cite{DahmenSiksek}, in the context of reducing the number of hyperelliptic curves to be considered for Chabauty. However, only the primes $q$ not dividing the level are considered there. In our case, we succeed without the need to apply Chabauty since we can sieve also at $q=2$ which is a prime in the level.
It should also be noted that the reason the unit $\epsilon = 1$ does not pass the local conditions from the modular method (unlike the situation in \cite{DahmenSiksek}) is because our original equation \eqref{fte:13-13-7} does not have the trivial solutions $\pm (1,0,1), \pm (0,1,1)$ due to the coefficient $3$. 
\end{remark}

\section{A remark on the equation $x^{2\ell} + y^{2m} = z^{p}$}

To conclude this paper, we use the technique from Section~\ref{S:reducible} to 
complete the following result of Anni--Siksek \cite{AnniSiksek}. 
\begin{theorem}[Anni--Siksek]
\label{T:AS}
Let $p=3,5,7,11$ or $13$. Let $\ell, m \geq 5$ be primes, and if $p=13$ suppose moreover that $m, \ell \neq 7$.
Then the only primitive solutions to 
\[
 x^{2\ell} + y^{2m} = z^{p}
\]
are the trivial ones $(x,y,z) = (\pm 1, 0, 1)$ and $(0, \pm 1, 1)$.
\end{theorem}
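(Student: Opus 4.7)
The plan is to combine the proof of~\cite{AnniSiksek} with the reducibility result of Proposition~\ref{P:reducible}. For $p\in\{3,5,7,11\}$ the theorem is established in~\cite{AnniSiksek} without any new input, so attention focuses on the case $p=13$ with $\ell,m\neq 7$, where~\cite{AnniSiksek} leaves an obstruction that the technique of Section~\ref{S:reducible} is designed to remove.

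Following~\cite{AnniSiksek}, to a non-trivial primitive solution $(a,b,c)$ of $x^{2\ell}+y^{2m}=z^{13}$ one attaches a Frey elliptic curve $E=E_{a,b}$ defined over the real cubic subfield $K$ of $\Q(\zeta_{13})$. After verifying irreducibility of $\rhobar_{E,\ell}$ and applying standard level-lowering results, one obtains an isomorphism
\[
  \rhobar_{E,\ell}\cong \rhobar_{f,\fp}
\]
for some Hilbert newform $f\in S_2(\fN)^{\rm new}$ and prime $\fp\mid\ell$ in $\Q_f$, with $\fN=2\cdot 3\cdot\fq_{13}$ the level analysed in Section~\ref{S:reducible}. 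Standard elimination via the quantities $A_q(f)$ for a few small auxiliary primes $q$ rules out most newforms in $S_2(\fN)^{\rm new}$, and the refined elimination technique of Section~\ref{S:elimination} handles all remaining forms except the four $f=f_d$ (for $d=12,21,24,27$) singled out in Proposition~\ref{P:reducible}.

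To discard these four obstructing forms, invoke Proposition~\ref{P:reducible}: the primes of $\Q_f$ at which $\rhobar_{f,\cdot}$ can be reducible all lie above $7$. Since $\ell\neq 7$ (and similarly $m\neq 7$), every prime $\fp\mid\ell$ of $\Q_f$ is automatically irreducible, so the refined elimination may legitimately be applied at $\fp$. One then checks, using an auxiliary prime such as $q=5$, that neither of the congruences (i) and (ii) of Section~\ref{S:elimination} holds for the pair $(f,\fp)$, excluding the isomorphism $\rhobar_{E,\ell}\cong\rhobar_{f,\fp}$ and completing the elimination. This mirrors the argument used in the proof of Theorem~\ref{T:reductionToThm7}, with the role of the mod-$7$ exponent there played by the mod-$\ell$ representation here.

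The main obstacle is to carry out the refined elimination \emph{uniformly} in $\ell$, since the congruences (i) and (ii) depend on $\ell$. In practice, as in Section~\ref{S:redg}, one expects a single choice of $q$, or a small combination such as $q\in\{5,11\}$, to produce trace differences in $\Q_f$ that are nonzero modulo every irreducible $\fp\mid\ell$ simultaneously; the explicit structure of the fields $\Q_f$ recorded in Table~\ref{table:coeff-fields}, together with the bound on the rational primes dividing $A_q(f)$, reduces this to a finite computation. The symmetry of the equation in $(x,\ell)$ and $(y,m)$ means that whenever irreducibility of $\rhobar_{E,\ell}$ is delicate, the parallel argument with $\rhobar_{E,m}$ applies, which is why only the joint hypothesis $\ell,m\neq 7$ is needed.
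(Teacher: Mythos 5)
Theorem~\ref{T:AS} is, in this paper, precisely the theorem of Anni--Siksek and is quoted with its hypothesis $m,\ell\neq 7$ for $p=13$ intact; the paper offers no new proof of it (its new contribution is the subsequent Corollary, which \emph{removes} that hypothesis via Proposition~\ref{P:f11}). Your citation of \cite{AnniSiksek} for $p\in\{3,5,7,11\}$ is fine, but the case $p=13$ with $\ell,m\neq 7$ is also exactly what \cite{AnniSiksek} prove --- the restriction $\ell,m\neq 7$ is in their statement precisely because the single form $\ff_{11}$ obstructs the elimination only at its prime $\fp_0\mid 7$. Attempting to re-prove that case with the machinery of Sections~\ref{S:reducible}--\ref{S:elimination} as you do is therefore unnecessary, and, as carried out, it imports data from the wrong Diophantine problem.

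Concretely: the level $\fN=2\cdot 3\cdot\fq_{13}$ and the four obstructing forms $f_d$ ($d=12,21,24,27$) of Proposition~\ref{P:reducible} arise from the Frey curve for $x^{13}+y^{13}=3z^{7}$; the factor $3$ in that level comes from the coefficient $3$ of that equation. In the Anni--Siksek setting $x^{2\ell}+y^{2m}=z^{13}$ there is no such coefficient, the relevant space for the obstruction is $S_2(2\cdot\fq_{13})^{\rm new}$ (see Section 8 of this paper), and the only form that evades elimination is $\ff_{11}$, with coefficient field $\Q(\zeta_7)^+$ and problematic prime above $7$. So the forms you propose to eliminate do not occur in this problem, and the form that does occur is not addressed by Proposition~\ref{P:reducible} (it is handled by Proposition~\ref{P:f11}, which is only needed when $\ell=7$ or $m=7$, i.e.\ outside the hypotheses of Theorem~\ref{T:AS}). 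Moreover, your inference ``the reducible primes all lie above $7$, hence for $\ell\neq 7$ every $\fp\mid\ell$ is irreducible'' does not follow from Proposition~\ref{P:reducible}, which asserts reducibility at certain primes above $7$ under a ramification hypothesis and says nothing about other primes; and the irreducibility actually needed for level lowering is that of $\rhobar_{E',\ell}$ on the Frey side, which, together with the uniform-in-$\ell$ elimination over the relevant fields, is the substantive content of \cite{AnniSiksek} and cannot be replaced by this paper's computations at a different level with the fixed exponent $7$. The correct proof of the statement as posed is simply the one in \cite{AnniSiksek}.
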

The proof of this theorem is a remarkable application
of the modular method over totally real subfields of $\Q(\zeta_p)^+$.
The extra conditions $m, \ell \neq 7$ for $p=13$ are required due to the presence of a single 
Hilbert newform, denoted $\ff_{11}$ in {\it loc. cit}, which evades the elimination step. 

Let $K \subset \Q(\zeta_{13})^+$ be the cubic field of Section~\ref{S:reducible}. Recall that $2$ is inert in~$K$ and that $\fq_{13}$ is the unique prime above~$13$. 
The newform $\ff_{11}$ belongs to $S_2(2\cdot\fq_{13})^{\rm new}$, the new subspace of Hilbert cusp forms of level $2 \cdot \fq_{13}$, weight~$2$, and trivial character. 
It is the unique newform in this space with field of coefficients $\Q_{\ff_{11}} = \Q(\zeta_7)^+$ the real cubic subfield of the cyclotomic field of $7$-th roots of unity, in which $7$ is totally ramified. 
Let $\fp_0$ be the unique prime of $\Q_{\ff_{11}}$ above 7. The authors are not able to exclude the possibility that 
\begin{equation} \label{E:f11}
\rhobar_{E',7} \simeq \rhobar_{\ff_{11},\fp_0},
\end{equation}
where $E'$ is the Frey curve defined in \cite[pg. 10]{AnniSiksek}; see \cite[pg. 19]{AnniSiksek} for further details. However, as observed in {\it loc. cit.}, numerical evidence 
strongly suggests that $\rhobar_{\ff_{11},\fp_0}^{ss} \simeq \chi_7 \oplus 1$ so, in particular, $\rhobar_{\ff_{11},\fp_0}$ is reducible. In Proposition~\ref{P:f11}, we show that this is indeed
the case. This means that the isomorphism~\eqref{E:f11} cannot occur since $\rhobar_{E',7}$ is irreducible. Therefore, since $\fp_0$ is the only prime 
in $\Q_{\ff_{11}}$ above 7, we have successfully completed the elimination step for the form $\ff_{11}$. 
This in turn allows us to remove the assumption that $m,\ell \neq 7$ for $p=13$ in Theorem~\ref{T:AS}. Thus we have the following corollary.

\begin{corollary}
Let $\ell, m \geq 5$ be primes. 
Then the only primitive solutions to 
\[
 x^{2\ell} + y^{2m} = z^{13}
\]
are the trivial ones $(x,y,z) = (\pm 1, 0, 1)$ and $(0, \pm 1, 1)$.
\end{corollary}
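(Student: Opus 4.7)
The plan is to deduce the corollary from Theorem~\ref{T:AS} by removing the hypothesis $\ell, m \neq 7$ at $p = 13$, which in turn reduces to proving Proposition~\ref{P:f11}: namely, that $\rhobar_{\ff_{11},\fp_0}$ is reducible. Once this is established, the isomorphism~\eqref{E:f11} cannot hold (as $\rhobar_{E',7}$ is irreducible), and since $\fp_0$ is the unique prime of $\Q_{\ff_{11}}$ above $7$, the form $\ff_{11}$ is eliminated and the Anni--Siksek machinery applies with no restriction on $\ell, m$.

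I will prove Proposition~\ref{P:f11} by transposing the argument of Proposition~\ref{P:reducible} to $\ff_{11}$; this should fit cleanly, because $\Q_{\ff_{11}} = \Q(\zeta_7)^+$ is cyclic cubic over its (necessarily unique) subfield $E = \Q$, and $7$ is totally ramified in $\Q_{\ff_{11}}$ with residue field $\F_{\fp_0} = \F_7$, so that any $\tau \in \Gal(\Q_{\ff_{11}}/\Q)$ reduces to the identity modulo $\fp_0$. Arguing by contradiction, assume that $\rhobar_{\ff_{11},\fp_0}$ is irreducible. Since $\ff_{11}$ is the unique newform in $S_2(2 \cdot \fq_{13})^{\rm new}$ whose coefficient field has degree three, its Hecke constituent is preserved by the action of $\Gal(K/\Q)$; a brief inspection of the first few Hecke eigenvalues will show that $\ff_{11}$ is not a base change from $\Q$. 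Hence, for a generator $\sigma$ of $\Gal(K/\Q)$ there is a non-trivial $\tau \in \Gal(\Q_{\ff_{11}}/\Q)$ with $a_{\fq^\sigma}(\ff_{11}) = \tau(a_\fq(\ff_{11}))$ for all primes $\fq$ of $K$, giving the congruence
\[
 a_{\fq^\sigma}(\ff_{11}) \equiv a_\fq(\ff_{11}) \pmod{\fp_0}.
\]
Combined with the assumed irreducibility, \cite[Theorem 2.14, Chapter III]{feit} then yields an absolutely irreducible representation $\rhobar : G_\Q \to \GL_2(\overline{\F}_7)$ with $\rhobar|_{G_K} \simeq \rhobar_{\ff_{11},\fp_0}$.

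To derive the contradiction, I will compute the Serre invariants of $\rhobar$ and invoke Khare--Wintenberger. The weight is $k(\rhobar) = 2$ and the nebentypus is trivial for the usual reasons. Since the conductor exponent of $\ff_{11}$ is $1$ at each of $(2)$ and $\fq_{13}$, the mod-$\fp_0$ conductor exponent of $\rhobar_{\ff_{11},\fp_0}$ at each of these primes is $0$ or $1$; then, arguing as in Proposition~\ref{P:reducible} and twisting if necessary at $13$ by a cubic character of $G_\Q$ that becomes trivial over $K$, one obtains $N(\rhobar) \in \{1,2,13,26\}$. For $N(\rhobar) \in \{1,2,13\}$ the space $S_2(N(\rhobar))^{\rm new}$ is empty (as $X_0(N)$ has genus $0$ for $N \leq 13$), immediately contradicting Serre's conjecture. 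In the remaining case $N(\rhobar) = 26$, $\rhobar$ must be congruent modulo $7$ to one of the two rational newforms in $S_2(26)^{\rm new}$, attached to the elliptic curves of Cremona labels $26a$ and $26b$. Since $5$ splits completely in $K$ (a quick check using that the class of $5$ in $\Gal(K/\Q) \cong \Z/3$ is trivial), comparing $\Tr(\rhobar_{\ff_{11},\fp_0}(\Frob_\fq)) \pmod{\fp_0}$ for a prime $\fq \mid 5$ of $K$ with $a_5(h) \pmod 7$ for each of the two candidate newforms $h$ is expected to yield the required contradiction.

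The main obstacle, exactly as in Proposition~\ref{P:reducible}, is this last step: ensuring that the descended $\rhobar$ really is incongruent modulo $7$ to each of the two candidate newforms at level $26$. Given how short the candidate list is and that $5$ provides a convenient split auxiliary prime, this should reduce to an explicit trace comparison at one or two primes; it is nonetheless the only place where the concrete arithmetic of $\ff_{11}$ enters and must be verified.
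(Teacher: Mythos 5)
Your overall strategy matches the paper exactly: reduce the corollary to Theorem~\ref{T:AS} plus the reducibility of $\rhobar_{\ff_{11},\fp_0}$, establish that reducibility by descending a hypothetical irreducible $\rhobar_{\ff_{11},\fp_0}$ to an irreducible $\rhobar : G_\Q \to \GL_2(\overline{\F}_7)$ via the symmetry $a_{\fq^\sigma}(\ff_{11}) \equiv a_\fq(\ff_{11}) \pmod{\fp_0}$ and \cite[Theorem 2.14, Chapter III]{feit}, then apply Khare--Wintenberger and eliminate the resulting classical candidates by trace comparison. The descent and the use of the split prime $5$ are all as in the paper.

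However, your Serre conductor analysis has a genuine gap. Since $K/\Q$ is totally ramified at $13$, it is \emph{not} the case that $\rhobar_{\ff_{11},\fp_0}$ being unramified at $\fq_{13}$ forces $\rhobar$ to be unramified at $13$: the inertia group $I_{13}$ then acts through its order-$3$ quotient, and after twisting to trivial nebentypus the restriction $\rhobar|_{I_{13}}$ is of the shape $\chi\oplus\chi^{-1}$ with $\chi$ a nontrivial cubic character, giving conductor $13^2$ at $13$. Thus the correct list of possible Serre levels is $\{1,\,2,\,13,\,26,\,13^2,\,2\cdot 13^2\}$, and your list misses $169$ and $338$. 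This is not a hypothetical omission: the paper's level-$(2)$ Hilbert newform (the only one at that level) is precisely the base change of the elliptic curve $338b1$ of conductor $338=2\cdot 13^2$, landing squarely in the case you dropped. The paper avoids the problem by \emph{first} applying Hilbert level lowering over $K$ (via \cite[Theorem 3.2.2]{BD14}) to the conductors $(1),(2),\fq_{13},2\fq_{13}$, showing $S_2((1))^{\rm new}=S_2(\fq_{13})^{\rm new}=0$ and that the unique form at level $(2)$ has reducible mod-$7$ representation (because $\rhobar_{338b1,7}$ is reducible). Only then does it descend to $\Q$, at which point the Steinberg-at-$(2)$-and-$\fq_{13}$ condition is guaranteed and the descended $\rhobar$ has level exactly $26$ with trivial nebentypus. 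Your proof would need either to insert this level-lowering step over $K$, or else to additionally rule out classical levels $169$ and $338$ (with the appropriate, possibly cubic, nebentypus).

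There is also a smaller issue with your final step. You propose to eliminate both rational newforms at level $26$ by comparing traces at a split prime above $5$. This works for $26a1$ (indeed $a_5(26a1)=-3\equiv 4\not\equiv 6\pmod 7$). But $26b1$ has a rational $7$-torsion point, so $\rhobar_{26b1,7}^{ss}\simeq 1\oplus\chi_7$ and hence $a_5(26b1)\equiv 1+5\equiv 6\pmod 7$, which is exactly the value $\Tr(\rhobar_{\ff_{11},\fp_0}(\Frob_\fq))\equiv 6\pmod{\fp_0}$; the trace comparison at $5$ therefore does \emph{not} eliminate $26b1$. The paper instead observes that $\rhobar_{26b1,7}$ is reducible (from the $7$-torsion point), which is incompatible with the assumed irreducibility of $\rhobar$; you should use this argument for $26b1$ rather than a trace comparison at $5$.
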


We now complete the proof of this result by showing that $\rhobar_{\ff_{11}, \fp_0}$ is indeed reducible. 

\begin{proposition} \label{P:f11}
The representation $\rhobar_{\ff_{11},\fp_0} : G_K \to \GL_2(\F_7)$ is reducible.
\end{proposition}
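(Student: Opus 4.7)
The plan is to mirror the proof of Proposition~\ref{P:reducible} verbatim: the setup is entirely parallel, since $\ff_{11}$ is a Hilbert newform over the cubic field $K$, and $\fp_0$ is a prime above $7$ at which $7$ is totally ramified in the coefficient field $\Q(\zeta_7)^+$ with residue field $\F_7$. I will assume for contradiction that $\rhobar_{\ff_{11},\fp_0}$ is irreducible and derive a contradiction via a descent to $G_\Q$ followed by Serre's conjecture.

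The first step is to exploit the $\Gal(K/\Q)$-action on the Hecke constituents of $S_2(2\cdot\fq_{13})^{\rm new}$ induced by cyclic base change. Because $\ff_{11}$ is \emph{the unique} newform in this space with coefficient field $\Q(\zeta_7)^+$, its Hecke constituent is $\Gal(K/\Q)$-stable. Since $\ff_{11}$ has non-rational Hecke eigenvalues, it is not a base change from $\Q$, so for any non-trivial $\sigma\in\Gal(K/\Q)$ there exists a non-trivial $\tau\in\Gal(\Q(\zeta_7)^+/\Q)$ with $a_{\fq^\sigma}(\ff_{11})=\tau(a_\fq(\ff_{11}))$ for all primes $\fq$ of $K$. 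As $\fp_0$ is totally ramified over $7$ in $\Q(\zeta_7)^+$ with residue field $\F_7$, $\tau$ reduces to the identity on $\F_7$, giving
\[
a_{\fq^\sigma}(\ff_{11})\equiv a_\fq(\ff_{11})\pmod{\fp_0}\qquad\text{for all primes }\fq.
\]
By Chebotarev and the (assumed) irreducibility, the conjugate representation $\rhobar_{\ff_{11},\fp_0}^\sigma$ is isomorphic to $\rhobar_{\ff_{11},\fp_0}$, and Feit's descent result \cite[Ch.~III, Thm.~2.14]{feit} produces an absolutely irreducible $\rhobar\colon G_\Q\to \GL_2(\Fbar_7)$ with $\rhobar|_{G_K}\simeq \rhobar_{\ff_{11},\fp_0}$.

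Next, I will compute the Serre invariants of $\rhobar$ exactly as in Proposition~\ref{P:reducible}: $k(\rhobar)=2$ because $7$ is unramified in $K$ and $\ff_{11}$ has parallel weight $2$; $\rhobar$ is Steinberg at $2$ because $(2)\|2\cdot\fq_{13}$ and $\rhobar_{\ff_{11},\fp_0}$ ramifies at $(2)$; and after possibly twisting by a character of conductor $13$ that becomes trivial over $K$, $\rhobar$ is Steinberg at $13$. Thus $N(\rhobar)=2\cdot 13=26$ and $\epsilon(\rhobar)=1$. By Khare--Wintenberger, $\rhobar\simeq \rhobar_{h,\fp}$ for some classical newform $h\in S_2(26)^{\rm new}$ and some prime $\fp\mid 7$ in $\Q_h$. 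The space $S_2(26)^{\rm new}$ is spanned by the two rational newforms associated with the isogeny classes $26a$ and $26b$.

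To close the argument, I will pick a rational prime $q$ splitting completely in $K$ -- the smallest choice is $q=5$, since $5$ lies in the index-two subgroup of squares in $(\Z/13\Z)^\ast$ cutting out $K$. For each $\fq\mid q$ and each candidate $h\in\{26a1,26b1\}$, the isomorphism above forces
\[
a_\fq(\ff_{11})\equiv a_q(h)\pmod{\fp_0}.
\]
A direct computation in \verb|Magma| of $a_\fq(\ff_{11})\bmod \fp_0$, compared with $a_5(26a1)$ and $a_5(26b1)$ modulo $7$, should yield the required contradiction. The only real obstacle is this final numerical verification: the structural ingredients transfer verbatim from Proposition~\ref{P:reducible}, and what remains is to check that neither of the mod-$7$ representations attached to the two conductor-$26$ elliptic curves agrees with $\rhobar_{\ff_{11},\fp_0}$ at a small split prime. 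Heuristically this is consistent with the numerical evidence in \cite{AnniSiksek} that $\rhobar_{\ff_{11},\fp_0}^{ss}\simeq \chi_7\oplus 1$, since neither $26a$ nor $26b$ admits a rational $7$-isogeny.
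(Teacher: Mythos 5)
Your descent-and-Serre strategy is the right skeleton, but there are two genuine gaps relative to the paper's argument.

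First, you invoke the "Steinberg at $2$" and "Steinberg at $13$" conclusions by asserting that $\rhobar_{\ff_{11},\fp_0}$ ramifies at $(2)$ and at $\fq_{13}$, but you never establish this. Unlike Proposition~\ref{P:reducible}, which carries the explicit hypothesis that $\rhobar_{f,\fp_0}$ ramifies at all primes dividing the level (a hypothesis verified elsewhere using the Frey curve's Serre level), Proposition~\ref{P:f11} has no such assumption, and a priori the residual conductor could be any divisor of $2\fq_{13}$. The paper therefore begins by applying level lowering (via \cite[Theorem~3.2.2]{BD14}): it enumerates the possible Serre conductors $(1),(2),\fq_{13},2\fq_{13}$, kills $(1)$ and $\fq_{13}$ because $\dim S_2(\fN')^{\rm new}=0$, kills $(2)$ because the unique newform there is the base change of $338b1$, whose mod $7$ representation is reducible, and only then concludes that $\fN'=2\fq_{13}$. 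This step is essential and cannot be skipped.

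Second, your plan to eliminate both conductor-$26$ forms by comparing Hecke eigenvalues at the split prime $q=5$ cannot succeed for $26b1$. Contrary to your closing heuristic, $26b1$ \emph{does} have a rational $7$-torsion point, so $\rhobar_{26b1,7}^{ss}\simeq \chi_7\oplus 1$; but the numerical evidence you cite suggests $\rhobar_{\ff_{11},\fp_0}^{ss}\simeq \chi_7\oplus 1$ as well, so the two have identical traces of Frobenius at \emph{every} prime, and no amount of eigenvalue comparison will separate them. The paper disposes of $26b1$ not numerically but structurally: its mod $7$ representation is reducible, whereas the descended $\rhobar$ is absolutely irreducible by construction (via Feit), so they cannot be isomorphic. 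Only $26a1$ is ruled out by the trace computation at $q=5$ (namely $a_5(26a1)=-3\not\equiv 6\pmod 7$). You should adopt this split treatment: irreducibility for $26b1$, a trace comparison for $26a1$.
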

\begin{proof} Suppose $\rhobar_{\ff_{11},\fp_0}$ is irreducible for a contradiction.

The conductor of $\rhobar_{\ff_{11},\fp_0}$ is $\fN' = (1)$, $(2)$, $\fq_{13}$ or $2\fq_{13}$. 
From the irreducibility assumption on~$\rhobar_{\ff_{11},\fp_0}$ 
and \cite[Theorem~3.2.2]{BD14} ,we conclude there exists a Hilbert newform~$g \in S_2(\fN')^{\rm new}$ 
such that $\rhobar_{\ff_{11},\fp_0} \simeq \rhobar_{g, \fp'}$ for some prime $\fp' \mid 7$ in the coefficient field~$\Q_g$ of~$g$.

For $\fN'=(1)$ and $\fN' = \fq_{13}$, we have $\dim S_2(\fN')^{\rm new} = 0$, so $\fN' = (2)$ or $2\fq_{13}$.

For $\fN' = (2)$, we have $\dim S_2(\fN')^{\rm new} = 1$. 
The unique newform corresponds to the isogeny class of the 
base change to $K$ of the elliptic curve $W$ with Cremona 
label~$338b1$. From LMFDB~\cite[Elliptic Curves over $\Q$]{lmfdb}, we see that 
$\rhobar_{W,7}$ is reducible, hence $\rhobar_{\ff_{11},\fp_0}\not \simeq \rhobar_{W, 7}|_{G_K}$.

We conclude that $\rhobar_{\ff_{11},\fp_0}$ ramifies at both $(2)$ and $\fq_{13}$, that is $\fN'=2\fq_{13}$.

Now, a similar argument to the one used in the proof of Proposition~\ref{P:reducible} shows that $\rhobar_{\ff_{11},\fp_0}$ extends to an irreducible representation 
$\rhobar : G_\Q \to \GL_2(\F_7)$, and that there is a classical newform $h \in S_2(26)^{\rm new}$ such that $\rhobar \simeq \rhobar_{h,\fp}$ for some prime
$\fp \mid 7$ in $\Q_h$. There are two newforms $h_1, h_2 \in S_2(26)^{\rm new}$, with
rational coefficients, corresponding to the isogeny classes of the  elliptic curves $W_1$ and $W_2$ with Cremona label~$26a1$ and $26b1$ respectively. 

The curve $W_2$ has a 7-torsion point, so $\rhobar_{h_2,7}$ is reducible, hence
$\rhobar \not \simeq \rhobar_{h_2,7}$. 

Finally, we observe that $a_5(W_1) = -3$ for the prime $5$, which is split in $K$. So, for a prime $\fq \mid 5$ in $K$, we have 
\[
 \Tr(\rhobar_{h_1,7}|_{G_K}(\Frob_\fq)) = \Tr(\rhobar_{W_1,7}(\Frob_5)) \equiv -3 \pmod{7}.
\]
But, we easily check that $\Tr(\rhobar_{\ff_{11},\fp_0}(\Frob_\fq)) \equiv 6 \not \equiv - 3 \pmod{\fp_0}$, showing that $\rhobar \not \simeq \rhobar_{h_1,7}$. 

Since there are no other possible forms~$h$ we obtain the desired contradiction.
\end{proof}

\end{document}